\documentclass[12pt, a4paper, oneside]{amsart}
\usepackage{amsmath, amsthm, amssymb, geometry, mathrsfs, hyperref, tikz-cd}
\usepackage[shortlabels, inline]{enumitem}

\newtheorem{theorem}{Theorem}[section]
\newtheorem{proposition}[theorem]{Proposition}

\newtheorem{lemma}[theorem]{Lemma}
\newtheorem{corollary}[theorem]{Corollary}

\theoremstyle{definition}
\newtheorem{definition}[theorem]{Definition}
\newtheorem{question}[theorem]{Question}

\newtheorem{remark}[theorem]{Remark}

\newcommand\C{\mathbb{C}}

\newcommand\N{\mathbb{N}}

\newcommand{\der}{\mathrm{d}}

\newcommand{\id}{\mathrm{id}}
\newcommand{\pr}{\mathrm{pr}}

\newcommand{\T}{\mathrm{T}}

\newcommand{\J}{\mathrm{J}}
\renewcommand{\j}{\mathrm{j}}

\DeclareMathOperator{\Aut}{Aut}

\DeclareMathOperator{\Sing}{Sing}
\DeclareMathOperator{\rank}{rank}
\DeclareMathOperator{\Hom}{Hom}

\newcommand{\CAP}{\mathrm{CAP}}

\newcommand{\aCAP}{\mathrm{aCAP}}

\begin{document}

\title[Thom's jet transversality theorem for regular maps]{Thom's jet transversality theorem \\ for regular maps}
\author{Yuta Kusakabe}
\address{Department of Mathematics, Graduate School of Science, Osaka University, Toyonaka, Osaka 560-0043, Japan}
\email{y-kusakabe@cr.math.sci.osaka-u.ac.jp}
\subjclass[2020]{Primary 14R20, 32Q56, 58A20; Secondary 32E30}
\keywords{transversality theorem, algebraically Oka manifold, flexibility, spray}

\begin{abstract}
We establish Thom's jet transversality theorem for regular maps from an affine algebraic manifold to an algebraic manifold satisfying a suitable flexibility condition.
It can be considered as the algebraic version of Forstneri\v{c}'s jet transversality theorem for holomorphic maps from a Stein manifold to an Oka manifold.
Our jet transversality theorem implies genericity theorems for regular maps of maximal ranks.
As an application, it follows that every connected compact locally flexible manifold is the image of a holomorphic submersion from an affine space.
We also show that every algebraically degenerate subvariety of codimension at least two in a locally flexible manifold has an Oka complement.
\end{abstract}

\maketitle

%%%%%%%%%%%%%%%%%%%%%%%%%%%%%%%%%%%%%%%%%%
%
% Introduction
%
%%%%%%%%%%%%%%%%%%%%%%%%%%%%%%%%%%%%%%%%%%

\section{Introduction}

Thom's jet transversality theorem \cite{Thom1956} is one of the most fundamental tools in differential geometry.
In 2006, Forstneri\v{c} \cite{Forstneric2006a} proved the complex analytic version of his theorem for holomorphic maps from a Stein manifold to an Oka manifold (see also \cite{Kaliman1996}).
Here, a complex manifold $Y$ is an \emph{Oka manifold} if for any holomorphic map $f:X\to Y$ from a Stein manifold there exists a holomorphic map $s:X\times\C^{N}\to Y$ such that $s(x,0)=f(x)$ and $s(x,\cdot):\C^{N}\to Y$ is a submersion at $0$ for each $x\in X$ (cf. \cite{Forstneric2017,Kusakabe2019,Kusakabe}).

In the present paper, we establish the complex algebraic version of Thom's jet transversality theorem.
To this end, let us recall the following definition.

\begin{definition}[{cf. \cite{Larusson2019}}]
\label{definition:aEll_1}
An algebraic manifold $Y$ is an \emph{algebraically Oka manifold} if for any regular map $f:X\to Y$ from an affine manifold there exists a regular map $s:X\times\C^{N}\to Y$ such that $s(x,0)=f(x)$ and $s(x,\cdot):\C^{N}\to Y$ is a submersion at $0$ for each $x\in X$.
\end{definition}

The most typical examples of algebraically Oka manifolds are locally flexible manifolds (cf. \cite{Arzhantsev2013,Kaliman2018}).
A quasi-affine manifold $Y$ is \emph{flexible} if for any $y\in Y$ the tangent space $\T_{y}Y$ is spanned by the tangent vectors to the orbits of one-parameter unipotent subgroups of $\Aut Y$ through $y$.
An algebraic manifold is \emph{locally flexible} if it is covered by flexible Zariski open subsets.
Complex Grassmannians and smooth nondegenerate toric varieties (i.e. smooth toric varieties with no torus factor) are known to be locally flexible \cite[Theorem 3]{Larusson2019} (see \cite[\S3]{Arzhantsev2013a} for more examples of flexible manifolds).
Currently, it is not known whether there exists an algebraically Oka manifold which is not locally flexible.

For regular maps from an affine manifold to an algebraically Oka manifold, Forstneri\v{c} proved the \emph{local} jet transversality theorem \cite[Theorem 4.3]{Forstneric2006a}.
Under an ostensibly stronger assumption on the target (that is weaker than local flexibility), we prove the following \emph{global} jet transversality theorem which generalizes \cite[Proposition 4.10]{Forstneric2006a} (compare with \cite[Theorem 4.9]{Forstneric2006a} and \cite[Corollary 1.3]{Kaliman1996}).
Here, $\J^{k}(X,Y)$ denotes the space of $k$-jets of holomorphic maps $X\to Y$, $\j^{k}f:X\to\J^{k}(X,Y)$ denotes the $k$-jet extension of a regular map $f:X
\to Y$ and $\Sing(s)$ denotes the singular locus of a regular map $s$ (i.e. the set of points where $s$ is not a submersion).

\begin{theorem}
\label{theorem:main}
Let $Y$ be an algebraic manifold.
Assume that for any $y\in Y$ there exist a Zariski open neighborhood $U\subset Y$ of $y$ and a regular map $s:U\times\C^{N}\to Y$ such that $\dim\Sing(s)<N$, $s(y,0)=y$ and $s(y,\cdot):\C^{N}\to Y$ is a submersion at $0$ for each $y\in U$.
Then regular maps from affine manifolds to $Y$ satisfy the following jet transversality theorem:

Let $X$ be an affine manifold, $X_{0}\subset X$ be a closed algebraic subvariety, $k_{l}$ $(l\in\N)$ be nonnegative integers, $A_{l}\subset X$ and $B_{l}\subset\J^{k_{l}}(X,Y)$ $(l\in\N)$ be (not necessarily closed or algebraic) complex submanifolds, and $f_{0}:X\to Y$ be a regular map such that $\j^{k_{l}}f_{0}|_{A_{l}}$ is transverse to $B_{l}$ at all points of $X_{0}\cap A_{l}$ for each $l\in\N$.
Then for any integer $k\geq\sup_{l\in\N}k_{l}$ the regular map $f_{0}$ can be approximated  uniformly on compacts by regular maps $f:X\to Y$ such that
\begin{enumerate}
\item $\j^{k}f|_{X_{0}}=\j^{k}f_{0}|_{X_{0}}$, and
\item $\j^{k_{l}}f|_{A_{l}}$ is transverse to $B_{l}$ for each $l\in\N$.
\end{enumerate}
\end{theorem}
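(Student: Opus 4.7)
The plan is to adapt Forstneri\v{c}'s proof of \cite[Proposition 4.10]{Forstneric2006a} to the algebraic setting, with the main new ingredient being the use of the singular-locus bound $\dim\Sing(s)<N$ in place of Cartan's Theorem B. The proof proceeds in three stages: (i) build a global regular dominating spray over $f_{0}$, (ii) enhance it by a polynomial ansatz so that its $k$-jet extension becomes a submersion onto $\J^{k}(X,Y)$, and (iii) extract a good parameter value by a parametric-transversality (Sard) argument, arranging the interpolation on $X_{0}$ by having the polynomial ansatz vanish to order $k+1$ on $X_{0}$.

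For stage (i), the hypothesis furnishes, for every $y\in Y$, a local spray $s:U\times\C^{N}\to Y$ with $U\ni y$ Zariski open and $\dim\Sing(s)<N$. Pulling back to $X$ via $f_{0}$ produces a regular map $s\circ(f_{0}\times\id)$ defined only on $f_{0}^{-1}(U)\times\C^{N}$. To globalize, I exhaust $X$ by compacts $K_{j}$, cover $f_{0}(K_{j})$ by finitely many such Zariski opens $U_{i}$, and compose the resulting local sprays in the manner used for algebraic Oka constructions: one forms
\[
F(x,t^{(1)},\ldots,t^{(m)})=s_{m}\!\bigl(\cdots s_{1}(f_{0}(x),g_{1}(x)t^{(1)})\cdots,\,g_{m}(x)t^{(m)}\bigr),
\]
where the regular cutoff functions $g_{i}:X\to\C$ are chosen to vanish to high order on a closed algebraic subset of $X$ disjoint from $f_{0}^{-1}(U_{i})\cap K_{j}$, so that each composition step is well-defined everywhere on $X$. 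The bound $\dim\Sing(s_{i})<N_{i}$ is precisely what allows one to perturb inside $\C^{N_{i}}$ and compose without destroying the submersion property, so that after iteration the differential $\partial_{t}F(x,0)$ surjects onto $\T_{f_{0}(x)}Y$ for every $x\in K_{j}$.

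For stage (ii), fix $k\geq\sup k_{l}$. Pick a regular function $h:X\to\C$ vanishing to order $k+1$ on $X_{0}$ (it exists since $X_{0}$ is a closed algebraic subvariety of the affine manifold $X$, so its ideal is finitely generated in the coordinate ring). Choose finitely many regular functions $\phi_{1},\ldots,\phi_{L}:X\to\C$ whose products $h\phi_{j}$ span, at each point of $K_{j}$, all $k$-jets of regular maps $X\to\C$ vanishing on $X_{0}$ to order $k+1$; this is again a finite-generation statement on the affine $X$. Now consider
\[
\tilde F(x,a)=F\!\left(x,\,h(x)\sum_{i=1}^{L}a_{i}\phi_{i}(x)\right),\qquad a\in\C^{LM}.
\]
Then $\tilde F(x,0)=f_{0}(x)$, $\j^{k}\tilde F(\cdot,a)|_{X_{0}}=\j^{k}f_{0}|_{X_{0}}$ for every $a$, and the combination of the dominating property of $F$ with the jet-generating property of the $\phi_{i}$ makes the evaluation
$\Phi:X\times\C^{LM}\to\J^{k}(X,Y)$, $\Phi(x,a)=\j^{k}\tilde F(\cdot,a)(x)$,
a submersion on $K_{j}\times\{0\}$, hence on a neighborhood thereof.

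For stage (iii), because $\Phi$ is a submersion, Abraham's parametric transversality theorem gives, for each fixed $l$, that for almost every small $a$ the restriction $\j^{k_{l}}\tilde F(\cdot,a)|_{A_{l}}$ is transverse to $B_{l}$ on the relevant compact piece of $A_{l}$. Since transversality is open and the compact exhaustion of $X$ (and of each $A_{l}$) is countable, a Baire argument yields $a$ arbitrarily small for which all countably many conditions hold simultaneously; the corresponding $f=\tilde F(\cdot,a)$ is the desired approximation. The \textbf{main obstacle} is stage (i): unlike the holomorphic case, where Theorem B lets one patch local sprays freely, here one must manufacture a \emph{regular} dominating spray on all of $X$ from local sprays defined only over Zariski opens in $Y$, and the hypothesis $\dim\Sing(s)<N$ is exactly the mechanism that lets the compositions remain regular and surjective on differentials everywhere. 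The rest of the argument (the polynomial ansatz and the Sard step) is formal once a global dominating spray is in hand.
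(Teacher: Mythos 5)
Your stages (ii) and (iii) --- the polynomial ansatz vanishing to order $k+1$ on $X_0$ followed by a Sard argument on the jet evaluation map --- match the paper's proof in outline. The gaps are in stage (i) and, more seriously, in how the hypothesis $\dim\Sing(s)<N$ is used. First, the iterated composition $s_m(\cdots s_1(f_0(x),g_1(x)t^{(1)})\cdots)$ is not well defined: $s_2$ is only defined on $U_2\times\C^{N_2}$, and the intermediate point $s_1(f_0(x),g_1(x)t^{(1)})$ need not lie in $U_2$ even when $f_0(x)$ does; cutoffs on $X$ cannot repair this because the obstruction lives on $Y$. The paper's route is to first extend each local spray $s:U\times\C^{N}\to Y$ to a global spray over $\id_Y$ across the complementary hypersurface $D=Y\setminus U$ (Lemma \ref{lemma:extension}, with $\Sing(\tilde s)\cap\widetilde E|_{D}=\emptyset$), and only then to compose, using Gromov's fiber-product composed sprays, whose total space is defined precisely so that the matching conditions $s_j(e_j)=p_{j+1}(e_{j+1})$ hold. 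Second, your cutoffs are adapted to a fixed compact $K_j$, so the resulting spray dominates only over $K_j$ and your Sard step yields transversality of $\j^{k_l}f|_{A_l}$ only on $A_l\cap K_j$; the theorem asserts it on all of $A_l$, and the Baire argument cannot bridge this because the family and the parameter space change with $j$. That weaker conclusion is exactly Forstneri\v{c}'s \emph{local} jet transversality theorem, which already holds for every algebraically Oka target and does not use $\dim\Sing(s)<N$ at all.

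This points to the actual role of $\dim\Sing(s)<N$, which you misattribute to keeping the compositions regular and dominating. Submersivity of the jet evaluation $\Phi$ along the zero section is automatic from domination, but Sard must be applied on a region of the form $(X\setminus X_0)\times U$ with $U$ independent of $x$; an open neighborhood of $(X\setminus X_0)\times\{0\}$ on which $\Phi$ is fiberwise submersive can pinch as $x$ leaves every compact, so ``submersive at $0$, hence on a neighborhood'' is not enough. The paper's mechanism is: the bound $\dim\Sing(s)<N$ survives the extension of Lemma \ref{lemma:extension} (Corollary \ref{corollary:dimSing(s)<rankE}), propagates through composition and iteration to produce a dominating composed spray $(E,p,s)$ over $\id_Y$ with $\dim\Sing(p,s)<\rank E$ (Lemmas \ref{lemma:Sing(s)} and \ref{lemma:Sing(p,s)}, Corollary \ref{corollary:dimSing(p,s)<rankE}), and then a \emph{generic} polynomial parameter $P\in W$ gives a section over $X\setminus X_0$ avoiding $\Sing(p,s)$, so that $s_{f_0}(x,\cdot)$, and hence each $\Phi_{k_l}(x,\cdot)$, is submersive at $P$ for every $x\in X\setminus X_0$ simultaneously. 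Without this control away from the zero section your argument cannot deliver conclusion (2) on all of $A_l$.
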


Since $A_{l}$ and $B_{l}$ $(l\in\N)$ are not assumed to be closed, we can also take stratified complex subvarieties $A_{l}$ and $B_{l}$ $(l\in\N)$ in Theorem \ref{theorem:main} (see \cite[\S4]{Forstneric2006a} for the definition of transversality in this setting).
The assumption in Theorem \ref{theorem:main} implies that $Y$ is algebraically Oka (by Lemma \ref{lemma:extension} and \cite[Theorem 1]{Larusson2019}).
Conversely, if the conclusion of Theorem \ref{theorem:main} holds for an algebraically Oka manifold $Y$, it satisfies the assumption in Theorem \ref{theorem:main} (Remark \ref{remark:assumption}).
Thus the assumption in Theorem \ref{theorem:main} is natural.
Since every locally flexible manifold satisfies this assumption (Proposition \ref{proposition:flexible}), the following corollary holds.

\begin{corollary}
Regular maps from any affine manifold to any locally flexible manifold satisfy the jet transversality theorem.
\end{corollary}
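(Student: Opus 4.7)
The plan is to verify that every locally flexible manifold satisfies the hypothesis of Theorem~\ref{theorem:main}; the corollary then follows by direct application. This verification is the content of Proposition~\ref{proposition:flexible}, and I would carry it out as follows.

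Fix $y \in Y$. Local flexibility supplies a flexible quasi-affine Zariski open neighborhood $U \subset Y$ of $y$. By the definition of flexibility, there exist one-parameter unipotent subgroups $H_{1},\dots,H_{N}$ of $\Aut U$ whose infinitesimal generators---complete algebraic vector fields $V_{1},\dots,V_{N}$ on $U$---satisfy that $V_{1}(y),\dots,V_{N}(y)$ span $\T_{y}Y$. The locus
\[
U' := \{y' \in U : V_{1}(y'),\dots,V_{N}(y') \text{ span } \T_{y'}Y\}
\]
is Zariski open in $U$ and contains $y$. Writing $\phi_{i}\colon U \times \C \to U$ for the algebraic flow of $V_{i}$, I define
\[
s \colon U' \times \C^{N} \to Y, \qquad s(y', t_{1}, \dots, t_{N}) = \phi_{1}^{t_{1}} \circ \cdots \circ \phi_{N}^{t_{N}}(y').
\]

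Three properties need checking. First, $s(y',0) = y'$ on $U'$ is immediate since $\phi_{i}^{0} = \id$. Second, $\partial_{t_{i}}s(y',0) = V_{i}(y')$, so the Jacobian in the $t$-variables at $t=0$ has columns spanning $\T_{y'}Y$ by the definition of $U'$; hence $s(y',\cdot)$ is a submersion at $0$. Third, and this is the decisive point for the singular-locus bound, for each fixed $t \in \C^{N}$ the map $s(\cdot,t)$ is the restriction to $U'$ of the automorphism $\phi_{1}^{t_{1}}\circ\cdots\circ\phi_{N}^{t_{N}}$ of $U$. Consequently $D_{y'}s(y',t)\colon \T_{y'}Y \to \T_{s(y',t)}Y$ is an isomorphism at every $(y',t) \in U' \times \C^{N}$, so the full differential $Ds(y',t)$ is already surjective through its block in the $y'$-direction. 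Therefore $\Sing(s) = \emptyset$, and the requirement $\dim\Sing(s)<N$ holds vacuously. Theorem~\ref{theorem:main} now applies and yields the corollary.

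Overall the argument is essentially forced by the structure of flexibility: it supplies the spanning complete algebraic vector fields, and the observation that every time-slice $s(\cdot,t)$ is an automorphism of $U$ renders the singular-locus bound trivial. The only mild technicality is passing from the pointwise spanning at $y$ to spanning on a Zariski open neighborhood by restricting to the algebraic subset $U'$, and even this is automatic once the vector fields $V_{i}$ are fixed. I do not foresee any serious obstacle.
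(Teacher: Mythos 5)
Your proposal is correct and follows essentially the same route as the paper: the paper reduces the corollary to Proposition~\ref{proposition:flexible}, which it justifies by citing the standard fact (proof of \cite[Proposition 5.6.22]{Forstneric2017}) that a flexible manifold carries a dominating algebraic spray whose time-slices $s(\cdot,w)$ are automorphisms, whence $\Sing(s)=\emptyset$ and the hypothesis of Theorem~\ref{theorem:main} holds vacuously. You simply reconstruct that cited spray explicitly by composing the flows of the unipotent one-parameter subgroups (shrinking to the Zariski open set $U'$ where the chosen fields span, which is harmless since the hypothesis only asks for some Zariski open neighborhood), and your key observation that each time-slice is an automorphism is exactly the paper's.
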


The proof of Theorem \ref{theorem:main} is given in Section \ref{section:proof}.
As immediate consequences of Theorem \ref{theorem:main}, we can obtain the following genericity results by the same proof as in \cite[Corollary 8.9.3]{Forstneric2017} (see the proof of \cite[Theorem 1.6]{Forstneric2017a} for (3)).
Here, $\Hom(X,Y)$ denotes the space of regular maps $X\to Y$ equipped with the compact-open topology.

\begin{corollary}
\label{corollary:genericity}
Assume that $X$ is an $n$-dimensional affine manifold and $Y$ is an $m$-dimensional algebraic manifold satisfying the assumption in Theorem \ref{theorem:main}.
Then the following hold:
\begin{enumerate}[leftmargin=*]
\item If $2n\leq m$, then the set of immersions $X\to Y$ is dense in $\Hom(X,Y)$.
\item If $2n+1\leq m$, then the set of injective immersions $X\to Y$ is dense in $\Hom(X,Y)$.
\item If $n\geq m$, then for any compact subset $K\subset Y$ the set of regular maps $f:X\to Y$ satisfying $K\subset f(X)$ and $\dim\Sing(f)<m$ is dense in $\Hom(X,Y)$.
\end{enumerate}
\end{corollary}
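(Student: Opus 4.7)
The plan is to derive each of (1)--(3) from Theorem~\ref{theorem:main} following the template of \cite[Corollary~8.9.3]{Forstneric2017} and, for (3), \cite[Theorem~1.6]{Forstneric2017a}, with our algebraic jet transversality theorem replacing its holomorphic counterpart.

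For (1) we would stratify $\J^{1}(X,Y)$ by corank: let $\Sigma_{r}\subset\J^{1}(X,Y)$ be the locally closed complex submanifold of $1$-jets of rank exactly $r$, of codimension $(n-r)(m-r)$. A regular map $f$ fails to be an immersion precisely when $\j^{1}f(X)$ meets some $\Sigma_{r}$ with $r<n$; when $2n\leq m$ this codimension satisfies $(n-r)(m-r)\geq m-n+1>n=\dim X$, so transversality of $\j^{1}f$ to $\Sigma_{r}$ forces an empty intersection. Applying Theorem~\ref{theorem:main} with $X_{0}=\emptyset$, $k_{l}=1$, $A_{l}=X$, and $B_{l}=\Sigma_{l}$ for $l=0,\dots,n-1$ would then produce regular approximations that are immersions.

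For (2) we would first invoke (1) to replace $f_{0}$ by an immersion and then pass to the off-diagonal $\tilde X=X\times X\setminus\Delta_{X}$, an affine manifold of dimension $2n$, together with the evaluation $\tilde f(x_{1},x_{2})=(f(x_{1}),f(x_{2}))\in Y\times Y$. The hypothesis of Theorem~\ref{theorem:main} is preserved under products via the obvious product submersion data, so the standard reduction of multijet transversality to ordinary jet transversality (as in the proof of \cite[Proposition~4.10]{Forstneric2006a}) would let us approximate $f$ by regular maps for which $\tilde f$ is transverse to the diagonal $\Delta_{Y}\subset Y\times Y$. Since $\operatorname{codim}\Delta_{Y}=m\geq 2n+1>\dim\tilde X$, this transversality means $\tilde f(\tilde X)\cap\Delta_{Y}=\emptyset$, i.e.\ $f$ is injective.

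For (3) the bound $\dim\Sing(f)<m$ would follow by applying Theorem~\ref{theorem:main} to the Thom--Boardman corank strata of $\J^{1}(X,Y)$: the corank-$c$ stratum has codimension $c(n-m+c)$, so for $\j^{1}f$ transverse to each stratum the preimage has dimension at most $n-c(n-m+c)\leq m-1$ whenever $c\geq 1$, yielding $\dim\Sing(f)\leq m-1<m$. To secure $K\subset f(X)$ we would use the hypothesis of Theorem~\ref{theorem:main} to cover $K$ by finitely many submersive parametrizations $s_{i}\colon U_{i}\times\C^{N_{i}}\to Y$, assemble a composed spray over $X$ whose evaluations can be made to sweep out any prescribed finite net in $K$ while preserving approximation of $f_{0}$, and then apply the jet transversality to this enlarged family to arrange submersivity through the net; openness of the condition $K\subset f(X)$ near such an $f$ would then yield density. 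The main obstacle is (3): balancing the transversality controlling $\Sing(f)$ against the surjectivity-type requirement $K\subset f(X)$ requires genuinely using the algebraic flexibility of $Y$ to reposition $f_{0}$ through each point of $K$, in the manner of \cite[Theorem~1.6]{Forstneric2017a}.
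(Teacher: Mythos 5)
Your overall strategy coincides with the paper's: the paper proves Corollary \ref{corollary:genericity} simply by observing that the proofs of \cite[Corollary 8.9.3]{Forstneric2017} and \cite[Theorem 1.6]{Forstneric2017a} go through with Theorem \ref{theorem:main} in place of the holomorphic jet transversality theorem, and your expansions of (1) and (3) --- the corank stratification of $\J^{1}(X,Y)$ with the codimension counts $(n-r)(m-r)$ and $c(n-m+c)$, plus interpolation through a finite net of $K$ combined with openness of submersivity at the net points for the surjectivity claim --- are exactly those arguments.

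The one step that would fail as written is your treatment of (2). You cannot literally apply Theorem \ref{theorem:main} to $\tilde X=X\times X\setminus\Delta_{X}$ with target $Y\times Y$, for three reasons. First, $\tilde X$ is not affine when $\dim X\geq 2$, since $\Delta_{X}$ has codimension $n$ rather than $1$. Second, the ``obvious product submersion data'' does not verify the hypothesis of Theorem \ref{theorem:main} for $Y\times Y$: if $s:U\times\C^{N}\to Y$ and $s':U'\times\C^{N'}\to Y$ satisfy $\dim\Sing(s)<N$ and $\dim\Sing(s')<N'$, then $\Sing(s\times s')$ contains $\Sing(s)\times(U'\times\C^{N'})$, whose dimension is $\dim\Sing(s)+\dim U'+N'$ and need not be smaller than $N+N'$. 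Third, even granting both points, Theorem \ref{theorem:main} perturbs arbitrary regular maps $\tilde X\to Y\times Y$, whereas you must stay within the constrained family $\tilde f=(f\times f)|_{\tilde X}$. The correct mechanism --- which is what the cited proofs actually do, and which your parenthetical reference to the proof of \cite[Proposition 4.10]{Forstneric2006a} gestures toward --- is to rerun the Sard argument from the proof of Theorem \ref{theorem:main} on the two-point evaluation $\tilde X\times W\to Y\times Y$, $((x_{1},x_{2}),P)\mapsto(s_{f_{0}}(x_{1},P),s_{f_{0}}(x_{2},P))$; this is a submersion off a small exceptional set because polynomial maps of degree at least $1$ separate distinct points of $X\subset\C^{n}$, so the two evaluations vary independently in $P$. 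In other words, the needed statement is a multijet transversality theorem proved by the same method as Theorem \ref{theorem:main}, not a formal consequence of its statement applied to a product target; you should state and prove that version rather than deduce it from products.
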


By using Theorem \ref{theorem:main}, we can also obtain the following approximation theorem for holomorphic maps with a lower bound on the rank which generalizes \cite[Theorem 9.12.4]{Forstneric2017} (see the first part of the proof of \cite[Theorem 9.12.4]{Forstneric2017}; see also \cite{Kolaric2008}).
Note that it contains approximation theorems for immersions ($n=r<m$) and submersions ($n>m=r$) as special cases (see \cite[Problem 9.14.3]{Forstneric2017}).

\begin{corollary}
\label{corollary:approximation}
Assume that $n,m,r$ are integers such that $(n-r+1)(m-r+1)\geq 2$ and $Y$ is an $m$-dimensional algebraic manifold satisfying the assumption in Theorem \ref{theorem:main}.
Then any holomorphic map $f:U\to Y$ from an open neighborhood of a compact convex set $K\subset\C^{n}$ satisfying $\inf_{z\in K}\rank\der f_{z}\geq r$ can be approximated uniformly on $K$ by holomorphic maps $\tilde f:\C^{n}\to Y$ such that $\inf_{z\in\C^{n}}\rank\der \tilde f_{z}\geq r$.
\end{corollary}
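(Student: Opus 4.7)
My plan is to reduce the statement to Theorem~\ref{theorem:main} via an Oka-theoretic extension step, following the strategy in the first part of the proof of \cite[Theorem 9.12.4]{Forstneric2017}. First, since the hypothesis on $Y$ implies that $Y$ is algebraically Oka and in particular Oka, by \cite[Theorem 1]{Larusson2019}, I would use the basic Oka--Weil approximation to extend $f$ to a regular map $g:\C^{n}\to Y$ approximating $f$ uniformly on the polynomially convex set $K$; by lower semicontinuity of the rank, this gives $\rank\der g_{z}\geq r$ on an open neighborhood of $K$.

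Next, I would set up the stratified transversality problem. Consider the rank stratification of the $1$-jet space: for $0\leq k\leq\min(n,m)$, the set $\Sigma_{k}=\{j\in\J^{1}(\C^{n},Y):\rank(j)=k\}$ is a smooth locally closed complex submanifold of codimension $(n-k)(m-k)$, this being the codimension of the rank-$k$ locus in $n\times m$ complex matrices. Applying Theorem~\ref{theorem:main} to $g$ with $X_{0}=\emptyset$, $k_{l}=1$, $A_{l}=\C^{n}$, and $B_{l}=\Sigma_{k}$ for each $k=0,1,\dots,r-1$ then yields a regular map $\tilde f:\C^{n}\to Y$ approximating $g$ on $K$ (and hence $f$ on $K$) whose $1$-jet extension $\j^{1}\tilde f$ is transverse to every $\Sigma_{k}$.

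The main obstacle is that this transversality alone only gives $(\j^{1}\tilde f)^{-1}(\Sigma_{k})$ as a complex subvariety of $\C^{n}$ of codimension $(n-k)(m-k)\geq(n-r+1)(m-r+1)\geq 2$, which may have positive dimension under only the hypothesis $(n-r+1)(m-r+1)\geq 2$ (for instance $n=m=4$, $r=3$). To force the residual bad locus to be empty, one has to combine transversality with the composed spray construction used in the first part of the proof of \cite[Theorem 9.12.4]{Forstneric2017}: using the dominating family $s:U\times\C^{N}\to Y$ furnished by the assumption on $Y$, I would replace the approximating map by a composition $z\mapsto s(\tilde f(z),h(z))$ with an auxiliary regular map $h:\C^{n}\to\C^{N}$, regard $h$ as an additional free variable, and reapply Theorem~\ref{theorem:main} to the resulting parametrized jet problem. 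The extra $N$ degrees of freedom introduced by $h$ are precisely what lets one translate the codimension bound $(n-r+1)(m-r+1)\geq 2$ into the emptiness of the bad locus, because one can now make the parametrized $1$-jet extension transverse to the pullback of each $\Sigma_{k}$ in an enlarged jet space of sufficiently high dimension. Iterating this combined transversality-and-spray step along a convex exhaustion of $\C^{n}$ via the standard Oka--Weil scheme then yields the desired holomorphic $\tilde f:\C^{n}\to Y$ with $\inf_{z\in\C^{n}}\rank\der\tilde f_{z}\geq r$.
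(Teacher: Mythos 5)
Your first two steps (extending $f$ to a regular map $g:\C^{n}\to Y$ by aCAP, then applying Theorem~\ref{theorem:main} to the rank strata $\Sigma_{k}\subset\J^{1}(\C^{n},Y)$ of codimension $(n-k)(m-k)$) are exactly the reduction the paper intends, and you correctly diagnose that this only makes the degeneracy locus $\Sigma=\{z:\rank\der\tilde f_{z}<r\}$ a closed subvariety of codimension $(n-r+1)(m-r+1)\geq 2$, not empty. The gap is in your proposed remedy. Introducing an auxiliary map $h:\C^{n}\to\C^{N}$ and asking for transversality of the enlarged jet map to the pullback of $\Sigma_{k}$ cannot shrink the bad locus any further: whenever a map from the $n$-dimensional source is transverse to a submanifold of codimension $c$, the preimage has codimension exactly $c$ (or is empty), no matter how many spray parameters were consumed in achieving that transversality. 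The parameters buy genericity, not codimension; so for, say, $n=m=10$, $r=9$ the degeneracy locus after any number of such iterations is still a $6$-dimensional subvariety, and no rerun of Theorem~\ref{theorem:main} in an enlarged jet space will empty it. Your closing appeal to an Oka--Weil induction over a convex exhaustion does not address this either.

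The step you are missing is of a different nature and is the reason the conclusion only asserts that $\tilde f$ is \emph{holomorphic} rather than regular. Because $g$ is a regular map, its degeneracy locus $\Sigma\subset\C^{n}$ is a closed \emph{algebraic} subvariety of codimension at least two, disjoint from $K$ once the approximation is close enough in the $C^{1}$ sense. One then composes $g$ with an injective, locally biholomorphic map $\phi:\C^{n}\to\C^{n}\setminus\Sigma$ (a Fatou--Bieberbach map onto a domain containing $K$ and avoiding $\Sigma$) which approximates the identity uniformly on $K$; such $\phi$ exists precisely because $\Sigma$ is algebraic of codimension at least two and $K$ is polynomially convex. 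The map $\tilde f=g\circ\phi$ then has rank at least $r$ at every point of $\C^{n}$ and approximates $f$ on $K$. This is the second half of the argument in the first part of the proof of \cite[Theorem 9.12.4]{Forstneric2017} that the paper invokes, and it cannot be replaced by further applications of the transversality theorem.
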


Forstneri\v{c} proved that every $n$-dimensional connected compact algebraically Oka manifold is the image of a strongly dominating regular map from $\C^{n}$ \cite[Theorem 1.6]{Forstneric2017a}.
Recall that every $n$-dimensional connected complex manifold is the image of a locally biholomorphic map from the unit polydisc in $\C^{n}$ by the result of Forn\ae ss and Stout \cite{Fornaess1977} (see also \cite{Fornaess1982}).
This result and Corollary \ref{corollary:approximation} imply the following by Rouch\'{e}'s theorem (cf. \cite[p.\,110]{Chirka1989}).

\begin{corollary}
Assume that $Y$ is an $n$-dimensional connected algebraic manifold satisfying the assumption in Theorem \ref{theorem:main}.
Then for any compact subset $K\subset Y$ there exists a holomorphic submersion $f:\C^{n+1}\to Y$ such that $K\subset f(\C^{n+1})$.
In particular, every $n$-dimensional connected compact locally flexible manifold is the image of a holomorphic submersion from $\C^{n+1}$.
\end{corollary}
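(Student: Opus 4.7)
The plan is to combine the Forn\ae ss--Stout theorem with Corollary \ref{corollary:approximation} and a Rouch\'e-type argument. By \cite{Fornaess1977} there is a locally biholomorphic map $g:\Delta^{n}\to Y$ from the open unit polydisc. Since $g$ is surjective and locally invertible, a finite-cover argument on the compact set $K$ yields finitely many polydiscs $P_{i}\subset\Delta^{n}$ on which $g$ is biholomorphic and with $g(\bigcup_{i}P_{i})\supset K$; I would then enclose $\bigcup_{i}\overline{P_{i}}$ in a compact convex polydisc $\tilde K=\overline{\Delta^{n}_{\rho}}\subset\Delta^{n}$, so that $g$ is holomorphic of constant rank $n$ on a neighborhood of $\tilde K$.

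Next I would inflate $g$ by one dimension. Define $\tilde g:\Delta^{n}\times\C\to Y$ by $\tilde g(z,w)=g(z)$; this is holomorphic of rank $n$ at every point. Then $K''=\tilde K\times\{0\}$ is a compact convex subset of $\C^{n+1}$ on a neighborhood of which $\tilde g$ is defined with $\rank\der\tilde g\equiv n$. Applying Corollary \ref{corollary:approximation} with source dimension $n+1$, target dimension $m=n$, and rank $r=n$ (so that $(n+1-r+1)(m-r+1)=2\cdot 1=2$), I obtain holomorphic maps $f:\C^{n+1}\to Y$ with $\rank\der f_{z}\geq n$ for every $z\in\C^{n+1}$, uniformly approximating $\tilde g$ on $K''$. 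Since $\dim Y=n$, each such $f$ is automatically a holomorphic submersion.

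Finally, to ensure $K\subset f(\C^{n+1})$ I would invoke Rouch\'e's theorem as cited in \cite[p.\,110]{Chirka1989}: if $f(\cdot,0)$ is sufficiently close to $g$ uniformly on $\tilde K$, then for each $i$ the restriction $f(\cdot,0)|_{P_{i}}$ is close enough to the biholomorphism $g|_{P_{i}}$ that the image $f(P_{i}\times\{0\})$ covers $g(P_{i})$. Since finitely many $P_{i}$ cover $K$ via $g$, a single sufficiently fine approximation works simultaneously for all points of $K$. The final sentence of the corollary follows by taking $K=Y$ when $Y$ is compact and locally flexible, noting that such $Y$ satisfies the hypothesis of Theorem \ref{theorem:main} by Proposition \ref{proposition:flexible}. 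The only real trick is the inflation by an extra $\C$-factor which brings the problem into the borderline case $(n-r+1)(m-r+1)=2$ of Corollary \ref{corollary:approximation}; the Rouch\'e step is then routine.
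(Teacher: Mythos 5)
Your proposal is correct and follows exactly the route the paper intends: Forn\ae ss--Stout gives the locally biholomorphic $g:\Delta^{n}\to Y$, the extra $\C$-factor puts you in the borderline case $(n+1-r+1)(m-r+1)=2$ of Corollary \ref{corollary:approximation} so that the global approximant is a submersion, and Rouch\'e's theorem preserves the covering of $K$. No discrepancies with the paper's argument.
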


In Section \ref{section:applications}, we give another application of Theorem \ref{theorem:main}.
In Oka theory, it is important to understand when a closed complex subvariety in an Oka manifold has an Oka complement.
By the Kobayashi conjecture \cite{Kobayashi2005}, it is natural to assume that the codimension of a subvariety is at least two.
It is known that there exists a discrete set $D\subset\C^{2}$ such that $\C^{2}\setminus D$ is not Oka \cite[Theorem 4.5]{Rosay1988}, and that every algebraically degenerate\footnote{A closed complex subvariety in an algebraic manifold $Y$ is \emph{algebraically degenerate} if it is contained in a proper closed algebraic subvariety of $Y$.} closed complex subvariety $A\subset\C^{n}$ of codimension at least two has an Oka complement \cite[Theorem 1.6]{Forstneric2002}.
We obtain the following generalization of the latter.

\begin{corollary}
\label{corollary:complement}
Let $Y$ be an algebraic manifold satisfying the assumption in Theorem \ref{theorem:main} and $A\subset Y$ be a closed complex subvariety of codimension at least two.
Assume that there exists a holomorphic automorphism $\varphi$ of $Y$ such that $\varphi(A)$ is algebraically degenerate.
Then the complement $Y\setminus A$ is Oka.
\end{corollary}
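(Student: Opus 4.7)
Since $\varphi$ is a holomorphic automorphism of $Y$, it restricts to a biholomorphism $Y \setminus A \cong Y \setminus \varphi(A)$, so it suffices to prove the corollary assuming $A$ itself is algebraically degenerate: $A \subset A'$ for some proper closed algebraic subvariety $A' \subset Y$. My plan is to verify the convex approximation property for $Y \setminus A$, which characterizes Oka manifolds: for every compact convex $K \subset \C^n$ and every holomorphic map $f : U \to Y \setminus A$ defined on a neighborhood $U$ of $K$, I aim to approximate $f$ uniformly on $K$ by entire maps $\tilde f : \C^n \to Y \setminus A$.

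Given such $f$, first approximate it on $K$ by a regular map $G : \C^n \to Y$, using that $Y$ is algebraically Oka (by the hypothesis together with L\'arusson's theorem cited in the introduction) combined with algebraic approximation of holomorphic maps from Stein affine sources to algebraically Oka targets; choosing the approximation fine enough, $G(V) \subset Y \setminus A$ for some open neighborhood $V \supset K$. Using the algebraic Oka property of $Y$ once more, construct a regular spray $S : \C^n \times \C^N \to Y$ with $S(z, 0) = G(z)$ and $S(z, \cdot) : \C^N \to Y$ a submersion at $0$ for every $z \in \C^n$. Then apply Theorem \ref{theorem:main} to $S$ with $X = \C^n \times \C^N$, $X_0 = \C^n \times \{0\}$, $k = 1$, and with data $A_l = X$, $k_l = 0$, $B_l$ ranging over the strata of a smooth complex stratification $A = \bigsqcup_l B_l$ into complex submanifolds of $Y$ of codimension $\geq 2$. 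The transversality hypothesis at points of $X_0$ is automatic, since $dS_{(z, 0)}$ is surjective onto $T_{G(z)} Y$ by the spray condition. This yields a regular map $S' : \C^n \times \C^N \to Y$ transverse to every $B_l$, agreeing with $S$ to first order along $X_0$ (so $S'(z, 0) = G(z)$ and the fiberwise submersion at $t = 0$ persists), and uniformly approximating $S$ on compacts.

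Set $Z := (S')^{-1}(A) \subset \C^n \times \C^N \cong \C^{n+N}$. Stratified transversality forces every irreducible component of $Z$ to have codimension $\geq 2$, and the inclusion $Z \subset (S')^{-1}(A')$ exhibits $Z$ as algebraically degenerate in $\C^{n+N}$: the algebraic subvariety $(S')^{-1}(A')$ is proper, since, for instance, $(z, 0) \notin (S')^{-1}(A')$ whenever $G(z) \notin A'$. By \cite[Theorem 1.6]{Forstneric2002}, the complement $\C^{n+N} \setminus Z$ is Oka. The holomorphic map $\iota : V \to \C^{n+N} \setminus Z$ defined by $\iota(z) = (z, 0)$ is well-defined because $S'(z, 0) = G(z) \in Y \setminus A$ for $z \in V$. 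By the convex approximation property for the Oka target $\C^{n+N} \setminus Z$, I approximate $\iota|_K$ uniformly by an entire map $h : \C^n \to \C^{n+N} \setminus Z$. Then $\tilde f := S' \circ h : \C^n \to Y \setminus A$ is entire and takes values in $Y \setminus A$ since $h$ avoids $Z = (S')^{-1}(A)$. On $K$ we have $h \approx \iota$, hence $\tilde f(z) = S'(h(z)) \approx S'(z, 0) = G(z) \approx f(z)$, which is the required approximation.

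The hard part is the jet transversality step: the perturbation $S'$ must simultaneously preserve the spray structure of $S$ along $X_0$ (so that the composition $S' \circ h$ still recovers $G$, and hence $f$, near $K$) and achieve global stratified transversality to $A$, all within the class of regular maps. This is precisely the content of Theorem \ref{theorem:main} with interpolation at level $k = 1$ combined with stratified transversality. The regularity of $S'$ is essential, not merely aesthetic: it is what makes $Z$ algebraically degenerate in $\C^{n+N}$, allowing the invocation of Forstneri\v c's 2002 theorem, which bridges from the algebraic transversality statement back to Oka-ness of the complement.
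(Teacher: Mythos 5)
Your argument is correct and follows essentially the same route as the paper: verify CAP for $Y\setminus A$, approximate $f$ by a regular map into $Y$ via aCAP, use Theorem \ref{theorem:main} to make it (stratified) transverse to $A$ so that the preimage of $A$ is an algebraically degenerate subvariety of codimension at least two in affine space, invoke Forstneri\v{c}--Prezelj for the complement in affine space, and compose with an approximation of the inclusion of $K$. The only difference is your detour through the spray $S:\C^{n}\times\C^{N}\to Y$ with $1$-jet interpolation along the zero section, which is superfluous: the paper applies the transversality theorem directly to the regular approximant $G:\C^{n}\to Y$ itself (no interpolation, $X_{0}=\emptyset$), since uniform approximation on compacts already guarantees the perturbed map stays close to $f$ on $K$ and off $A$ there -- though your spray does hand you a cleaner reason why $(S')^{-1}(A')$ is a proper subvariety, a point both arguments otherwise leave implicit.
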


Since a connected affine algebraic group without nontrivial characters is flexible \cite[Proposition 5.4]{Arzhantsev2013}, Corollary \ref{corollary:complement} generalizes the result of Winkelmann \cite[Theorem 2.9]{Winkelmanna} for the complement of a tame discrete set in such an algebraic group (see \cite[Theorem 2.1]{Winkelmanna}).

It is known that local flexibility is preserved by removing closed algebraic subvarieties of codimension at least two \cite[Theorem 1.1]{Flenner2016}.
Thus it is reasonable to expect that the algebraic Oka property is also preserved by the same operation (cf. \cite[Problem 6.4.3]{Forstneric2017}).
At present, however, we can only conclude that a closed algebraic subvariety of codimension at least two in an algebraic manifold satisfying the assumption in Theorem \ref{theorem:main} has a complement enjoying aCAP (a Runge type approximation property which is implied by the algebraic Oka property; see Definition \ref{definition:CAP} and Remark \ref{remark:aCAP}).

%%%%%%%%%%%%%%%%%%%%%%%%%%%%%%%%%%%%%%%%%%
%
% Singularities of algebraic sprays
%
%%%%%%%%%%%%%%%%%%%%%%%%%%%%%%%%%%%%%%%%%%

\section{Singularities of algebraic sprays}
\label{section:sing}

In this section, we recall the notion of algebraic sprays and investigate their singularities to prove Theorem \ref{theorem:main}.
For a regular map $p:E\to X$, let $E_{x}$ denote the fiber $p^{-1}(x)$ over a point $x\in X$.

\begin{definition}
\label{definition:spray}
Let $X$ and $Y$ be algebraic manifolds.
\begin{enumerate}[leftmargin=*]
\item A \emph{(global) algebraic spray} over a regular map $f:X\to Y$ is a triple $(E,p,s)$ where $p:E\to X$ is an algebraic vector bundle and $s:E\to Y$ is a regular map such that $s(0_{x})=f(x)$ for all $x\in X$.
If the algebraic vector bundle $p:E\to X$ is trivial, we simply write $s:E\to Y$ instead of $(E,p,s)$.
\item A family of algebraic sprays $\{(E_{j},p_{j},s_{j})\}_{j=1}^{k}$ over $f:X\to Y$ is \emph{dominating} if $\sum_{j=1}^{k}(\der s_{j})_{0_{x}}(E_{j,x})=\T_{f(x)}Y$ holds
for each $x\in X$.
\end{enumerate}
\end{definition}

Note that the regular map $s:X\times\C^{N}\to Y$ in Definition \ref{definition:aEll_1} (resp. $s:U\times\C^{N}\to Y$ in Theorem \ref{theorem:main}) is nothing but a dominating algebraic spray over $f:X\to Y$ (resp. the inclusion $U\hookrightarrow Y$).

Let us first recall the following lemma which was used in the proof of Gromov's localization principle for algebraically Oka manifolds \cite[3.5.B]{Gromov1989} (see also \cite[Proposition 6.4.2]{Forstneric2017} and \cite[Proposition 1.4]{Kaliman2018}).

\begin{lemma}
\label{lemma:extension}
Assume that $Y$ is an algebraic manifold, $D\subset Y$ is an algebraic hypersurface and $(E,p,s)$ is an algebraic spray over the inclusion $Y\setminus D\hookrightarrow Y$.
Then there exist an algebraic spray $(\widetilde E,\tilde p,\tilde s)$ over the identity map $\id_{Y}$ and an isomorphism $\varphi:E\to\widetilde E|_{Y\setminus D}$ of algebraic vector bundles such that $\tilde s\circ\varphi=s$ and $\Sing(\tilde s)\cap\widetilde E|_{D}=\emptyset$.
\end{lemma}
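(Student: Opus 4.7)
My plan is to first extend the vector bundle $E$ across $D$ to an algebraic vector bundle $E'$ on all of $Y$, and then twist by a sufficiently high power of the ideal sheaf $\cI_D$ of $D$ to simultaneously absorb the poles of the spray map $s$ along $D$ and to guarantee submersivity on the resulting bundle over $D$. For the extension, I would take a coherent extension of the sheaf of sections of $E$ to $Y$ (e.g., via the reflexive hull of a suitable torsion-free extension, using smoothness of $Y$ and Cartier-ness of $D$) to obtain an algebraic vector bundle $E'\to Y$ with $E'|_{Y\setminus D}\cong E$. The regular map $s$ then becomes a rational $s'\colon E'\dashrightarrow Y$ with indeterminacy locus in $E'|_D$, and I would set $\tilde E_1:=\cI_D^k\otimes_{\cO_Y}E'$ for an integer $k$ to be chosen: this is an algebraic vector bundle canonically included in $E'$, and in local coordinates where $\cI_D=(f)$ and $E'$ is trivialized, the inclusion multiplies fibre coordinates by $f^k$.

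In these coordinates, writing the spray as $s(y,u)=y+\sum_{|\alpha|\geq 1}(T_\alpha(y)/f(y)^{n_\alpha})u^\alpha$ with $T_\alpha$ regular on $Y$ and finitely many terms (the constant-in-$u$ part being $y$ because $s(y,0)=y$) gives
\[
\tilde s_1(y,u):=s(y,f(y)^k u)=y+\sum_{|\alpha|\geq 1}T_\alpha(y)f(y)^{k|\alpha|-n_\alpha}u^\alpha,
\]
which is regular on $\tilde E_1$ once $k|\alpha|\geq n_\alpha$ for every $\alpha$ in the support. On $Y\setminus D$, $\cI_D^k$ is trivial, so the canonical bundle isomorphism $\varphi\colon E\to\tilde E_1|_{Y\setminus D}$ (locally, multiplication by the unit $f^k$) satisfies $\tilde s_1\circ\varphi=s$ by construction.

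Finally, I will enlarge $k$ once more so that $k|\alpha|-n_\alpha\geq 2$ for every $\alpha\neq 0$ in the support. At a point $(y,u)\in\tilde E_1|_D$ (so $f(y)=0$), differentiating the displayed expansion term by term annihilates every $\alpha$-contribution with $|\alpha|\geq 1$, because $f(y)=0$ kills both $f^{k|\alpha|-n_\alpha}$ (exponent $\geq 2$) and its first derivative $(k|\alpha|-n_\alpha)f^{k|\alpha|-n_\alpha-1}df$ (exponent $\geq 1$); the only surviving contribution is the differential of the $y$ term, which is the identity on $T_yY$ in the base direction and zero in the fibre direction. Hence $d\tilde s_1|_{(y,u)}\colon T_yY\oplus(\tilde E_1)_y\to T_yY$ is the projection onto the first factor, in particular surjective, so $(y,u)\notin\Sing(\tilde s_1)$. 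Setting $(\tilde E,\tilde p,\tilde s):=(\tilde E_1,\tilde p_1,\tilde s_1)$ completes the argument.

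The hard part will be the first step — producing a globally locally-free extension $E'$ of $E$ across $D$. Once this is in hand, the rest is routine pole-counting in local coordinates and a direct chain-rule check.
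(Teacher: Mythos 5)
Your overall strategy --- extend the bundle across $D$, twist by a high power of $\cI_{D}$ so that precomposition absorbs the poles of $s$, and observe that along $\widetilde E|_{D}$ the differential of $\tilde s$ degenerates to the projection onto the base --- is the same construction the paper relies on: it cites the proof of \cite[Proposition 1.4]{Kaliman2018} for the extension and then adds exactly your chain-rule observation (the existence through each $e\in\widetilde E|_{D}$ of a local section on which $\tilde s$ is the identity to first order) to conclude $\Sing(\tilde s)\cap\widetilde E|_{D}=\emptyset$. The problem is that the step you yourself flag as the hard one is a genuine gap, and the fix you sketch does not work. The reflexive hull (double dual) of a coherent extension of the sheaf of sections of $E$ is reflexive but in general \emph{not} locally free once $\dim Y\geq 3$: reflexive sheaves on a smooth variety are only guaranteed to be locally free outside a closed set of codimension $3$, and there is no general theorem extending a vector bundle across a hypersurface. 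This is precisely why the constructions in \cite{Gromov1989}, \cite{Forstneric2017} and \cite{Kaliman2018} do not extend $E$ naively: one extends the sheaf of sections coherently, resolves the extension by a vector bundle on $Y$ (using the resolution property of smooth varieties), and pulls the spray back along the resulting surjection onto $E$ over $Y\setminus D$; alternatively, in the only situation the paper actually uses (Corollary \ref{corollary:dimSing(s)<rankE}), $E=U\times\C^{N}$ is trivial and $\widetilde E=\cO_{Y}(-kD)^{\oplus N}$ does the job at once. Without one of these devices your argument does not get off the ground.

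A secondary inaccuracy: the expansion $s(y,u)=y+\sum_{|\alpha|\geq 1}T_{\alpha}(y)f(y)^{-n_{\alpha}}u^{\alpha}$ with finitely many terms is not available, because $Y$ is not affine; read in a chart of $Y$, the map $s$ is only a rational function of the fibre variable, so the expansion around the zero section is an infinite series. The argument can be repaired --- near $\widetilde E|_{D}$ the substituted fibre variable $f(y)^{k}u$ tends to $0$, and expanding $1/Q(y,u)$ as a geometric series around $u=0$ (where $Q(y,0)$ is a unit on $Y\setminus D$) shows that $n_{\alpha}$ grows at most linearly in $|\alpha|$, so one $k$ uniformizes all terms, and regularity of $\tilde s$ across $\widetilde E|_{D}$ then follows from holomorphic extension together with normality --- but this is not the ``routine pole-counting'' you describe, and the final differential computation (which is correct, and coincides with the paper's) must be justified for a convergent series rather than a finite sum.
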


\begin{proof}
See the proof of \cite[Proposition 1.4]{Kaliman2018} for the construction of an algebraic spray $(\widetilde E,\tilde p,\tilde s)$ over the identity map $\id_{Y}$ and an isomorphism $\varphi:E\to\widetilde E|_{Y\setminus D}$ of algebraic vector bundles such that $\tilde s\circ\varphi=s$.
By construction, we may assume that for any $e\in\widetilde E|_{D}$ there exists a local holomorphic section $f:U\to\widetilde E$ from an open neighborhood $U\subset Y$ of $\tilde p(e)$ such that $f(\tilde p(e))=e$ and $\der(\tilde s\circ f)_{\tilde p(e)}=\id_{\T_{\tilde p(e)}Y}$.
This implies $\Sing(\tilde s)\cap\widetilde E|_{D}=\emptyset$.
\end{proof}

By using the assumption in Theorem \ref{theorem:main} and the above lemma, we can construct a dominating family of algebraic sprays over the identity map with small singular loci.

\begin{corollary}
\label{corollary:dimSing(s)<rankE}
Assume that $Y$ is an algebraic manifold satisfying the assumption in Theorem \ref{theorem:main}.
Then for any $y\in Y$ there exists an algebraic spray $(E,p,s)$ over the identity map $\id_{Y}$ such that $\dim\Sing(s)<\rank E$ and $0_{y}\not\in\Sing(p,s)$.
\end{corollary}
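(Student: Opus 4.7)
The plan is to extract the local dominating algebraic spray guaranteed by the assumption in Theorem~\ref{theorem:main}, reshape its base so that it is the complement of an algebraic hypersurface, globalize via Lemma~\ref{lemma:extension}, and then check that both quantitative conclusions survive the extension.

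Fix $y\in Y$ and invoke the assumption to obtain a Zariski open $U\subset Y$ containing $y$ together with a regular map $s:U\times\C^{N}\to Y$ such that $s(y',0)=y'$, $s(y',\cdot)$ is a submersion at $0$ for every $y'\in U$, and $\dim\Sing(s)<N$. I would first shrink $U$ to the form $Y\setminus D$ for some algebraic hypersurface $D\subset Y$ with $y\notin D$: since $Y\setminus U$ is a proper closed subvariety of the (quasi-projective) manifold $Y$ not containing $y$, one can find a section of a sufficiently ample line bundle on a projective completion that vanishes on $\overline{Y\setminus U}$ but not at $y$, and intersect its zero locus with $Y$. Restricting $s$ to $(Y\setminus D)\times\C^{N}$ preserves all three listed properties (and can only shrink the singular locus), so I may assume $U=Y\setminus D$.

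Now I would feed $(U\times\C^{N},\pr_{1},s)$ into Lemma~\ref{lemma:extension} to obtain an algebraic spray $(\widetilde E,\tilde p,\tilde s)$ over $\id_{Y}$ together with a vector bundle isomorphism $\varphi:U\times\C^{N}\to\widetilde E|_{U}$ satisfying $\tilde s\circ\varphi=s$ and $\Sing(\tilde s)\cap\widetilde E|_{D}=\emptyset$. These two conclusions are exactly what is needed to transfer the required estimates. Since $\widetilde E|_{U}\cong U\times\C^{N}$ has rank $N$, so does $\widetilde E$; the emptiness on fibers over $D$ places $\Sing(\tilde s)\subset\widetilde E|_{U}$, and the intertwining relation identifies $\Sing(\tilde s)$ with $\varphi(\Sing(s))$, so $\dim\Sing(\tilde s)=\dim\Sing(s)<N=\rank\widetilde E$. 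Vector bundle isomorphisms preserve zero sections, so $\varphi(y,0)=0_{y}$, and the identity $(\tilde p,\tilde s)\circ\varphi=(\pr_{1},s)$ transports the submersion of $s(y,\cdot)$ at $0$ into $0_{y}\notin\Sing(\tilde p,\tilde s)$. The triple $(\widetilde E,\tilde p,\tilde s)$ then satisfies the conclusion.

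The only step where any care is needed is the preliminary reduction to $U=Y\setminus D$, which is a standard piece of quasi-projective geometry; after that, everything is a direct transport of the local hypotheses through $\varphi$, using that Lemma~\ref{lemma:extension} produces an extension that is automatically a submersion over $D$.
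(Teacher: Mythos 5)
Your proposal is correct and follows the paper's proof essentially verbatim: shrink $U$ so that its complement is a hypersurface $D$ not containing $y$, apply Lemma~\ref{lemma:extension}, and transport the bounds on $\Sing(s)$ and the fiberwise submersivity at $0_{y}$ through the bundle isomorphism $\varphi$. The only difference is that you spell out the hypersurface reduction and the transfer of the two conclusions in more detail than the paper, which simply asserts them; both of your elaborations are sound.
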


\begin{proof}
Take a point $y\in Y$ arbitrarily.
By assumption, there exist a Zariski open neighborhood $U\subset Y$ of $y$ and a dominating algebraic spray $s_{0}:U\times\C^{N}\to Y$ over the inclusion $U\hookrightarrow Y$ satisfying $\dim\Sing(s_{0})<N$.
After shrinking $U\ni y$ if necessary, we may assume that $D=Y\setminus U$ is an algebraic hypersurface.
Then Lemma \ref{lemma:extension} implies that the algebraic spray $s_{0}:U\times\C^{N}\to Y$ extends to an algebraic spray $(E,p,s)$ over the identity map $\id_{Y}$ such that $\Sing(s)\cap E|_{D}=\emptyset$.
This extension clearly satisfies $\dim\Sing(s)<N=\rank E$ and $0_{y}\not\in\Sing(p,s)$.
\end{proof}

In order to prove Theorem \ref{theorem:main}, we need to recall Gromov's method of composed sprays \cite[\S1.3]{Gromov1989} (see also \cite[\S6.3]{Forstneric2017}).

\begin{definition}[{cf. \cite[Definition 6.3.5]{Forstneric2017}}]
Let $X,Y$ and $Z$ be algebraic manifolds.
\begin{enumerate}[leftmargin=*]
\item For a family of algebraic sprays $\{(E_{j},p_{j},s_{j})\}_{j=1}^{k}$ over the identity map $\id_{Y}$, the \emph{composed spray} $(E_{1}*\cdots*E_{k},p_{1}*\cdots*p_{k},s_{1}*\cdots*s_{k})$ is defined by
\begin{align*}
E_{1}*\cdots*E_{k}&=\left\{(e_{1},\ldots,e_{k})\in\prod_{j=1}^{k}E_{j}:s_{j}(e_{j})=p_{j+1}(e_{j+1}),\ j=1,\ldots,k-1\right\},\\
(p_{1}*\cdots&*p_{k})(e_{1},\ldots,e_{k})=p_{1}(e_{1}),\quad (s_{1}*\cdots*s_{k})(e_{1},\ldots,e_{k})=s_{k}(e_{k}).
\end{align*}
\item For an algebraic spray $(E,p,s)$ over the identity map $\id_{Y}$ and a natural number $k\in\N$, the $k$-th \emph{iterated spray} $(E^{(k)},p^{(k)},s^{(k)})$ is the composed spray $(E*\cdots*E,p*\cdots*p,s*\cdots*s)$ of $k$ copies of $(E,p,s)$.
\item For a regular map $f:X\to Y$ and an algebraic spray $(E,p,s)$ over a regular map $g:Y\to Z$, the \emph{pullback spray} $(f^{*}E,f^{*}p,f^{*}s)$ over $g\circ f:X\to Z$ consists of the pullback bundle $f^{*}p:f^{*}E\to X$ and the composition $f^{*}s:f^{*}E\to Z$ of the induced map $f^{*}E\to E$ and $s:E\to Z$.
\end{enumerate}
\end{definition}

\begin{remark}
Note that the projection $p_{1}*\cdots*p_{k}:E_{1}*\cdots*E_{k}\to Y$ of a composed spray $(E_{1}*\cdots*E_{k},p_{1}*\cdots*p_{k},s_{1}*\cdots*s_{k})$ does not have any canonical algebraic vector bundle structure.
However, it has the zero section $\{(0_{y},\cdots,0_{y})\in E_{1}*\cdots*E_{k}:y\in Y\}$.
The rank of $E_{1}*\cdots*E_{k}$ is defined to be the dimension of the fibers of $p_{1}*\cdots*p_{k}$ and denoted by $\rank(E_{1}*\cdots*E_{k})$.
\end{remark}

Let us prove two lemmas on the singular locus of a composed spray.

\begin{lemma}
\label{lemma:Sing(s)}
Assume that $Y$ is an algebraic manifold and $\{(E_{j},p_{j},s_{j})\}_{j=1}^{k}$ is a family of algebraic sprays over the identity map $\id_{Y}$ such that $\dim\Sing(s_{j})<\rank E_{j}$ for each $j$.
Set $\Sigma=(E_{1}*\cdots*E_{k})\setminus\prod_{j=1}^{k}(E_{j}\setminus\Sing(s_{j}))$.
Then the following holds:
\begin{align*}
\dim\Sing(s_{1}*\cdots*s_{k})\leq\dim\Sigma<\rank(E_{1}*\cdots*E_{k}).
\end{align*}
\end{lemma}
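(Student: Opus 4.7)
The plan is to prove the two inequalities separately. For $\dim\Sing(s_{1}*\cdots*s_{k})\leq\dim\Sigma$, I will establish the set inclusion $\Sing(s_{1}*\cdots*s_{k})\subset\Sigma$. Take a point $(e_{1},\ldots,e_{k})\notin\Sigma$, so that $s_{j}$ is a submersion at $e_{j}$ for every $j$. Because each projection $p_{j}$ is smooth, the iterated fiber product $E_{1}*\cdots*E_{k}$ is smooth at this point, with tangent space consisting of tuples $(v_{1},\ldots,v_{k})\in\prod_{j}T_{e_{j}}E_{j}$ satisfying the linearized equations $(\der s_{j})_{e_{j}}(v_{j})=(\der p_{j+1})_{e_{j+1}}(v_{j+1})$, and the differential of $s_{1}*\cdots*s_{k}$ sends $(v_{1},\ldots,v_{k})$ to $(\der s_{k})_{e_{k}}(v_{k})$. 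Given any $w\in T_{s_{k}(e_{k})}Y$, I would first pick $v_{k}$ mapping to $w$ by submersivity of $s_{k}$, and then, proceeding downward in $j$, pick $v_{j}$ mapping to $(\der p_{j+1})_{e_{j+1}}(v_{j+1})$ by submersivity of $s_{j}$. The resulting tuple lies in the tangent space and witnesses submersivity of $s_{1}*\cdots*s_{k}$ at $(e_{1},\ldots,e_{k})$.

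For $\dim\Sigma<\rank(E_{1}*\cdots*E_{k})=\sum_{j}\rank E_{j}$, I will argue by induction on $k$. The base case $k=1$ is precisely the hypothesis $\dim\Sing(s_{1})<\rank E_{1}$. For the inductive step, set $(F,q,t)=(E_{1},p_{1},s_{1})*\cdots*(E_{k-1},p_{k-1},s_{k-1})$, so that $\rank F=\sum_{j<k}\rank E_{j}$, and let $\Sigma^{F}\subset F$ denote the analogous exceptional set for the truncated family. The inductive hypothesis together with the first part gives $\dim\Sing(t)\leq\dim\Sigma^{F}<\rank F$. Since $E_{1}*\cdots*E_{k}=F\times_{Y}E_{k}$ via $t$ and $p_{k}$, the set $\Sigma$ decomposes as $(\Sigma^{F}\times_{Y}E_{k})\cup(F\times_{Y}\Sing(s_{k}))$, according to whether a singularity occurs among the first $k-1$ factors or in the last. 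Because $p_{k}$ is an algebraic vector bundle of rank $r_{k}=\rank E_{k}$, base change yields $\dim(\Sigma^{F}\times_{Y}E_{k})=\dim\Sigma^{F}+r_{k}<\rank F+r_{k}$.

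The main obstacle is bounding $\dim(F\times_{Y}\Sing(s_{k}))$, since the map $t\colon F\to Y$ need not be equidimensional. I will resolve this by stratifying $F=F^{*}\sqcup\Sing(t)$ with $F^{*}=F\setminus\Sing(t)$. On $F^{*}$ the restriction $t|_{F^{*}}$ is by definition a submersion of relative dimension $\rank F$, so base change of this smooth morphism along $p_{k}\colon\Sing(s_{k})\to Y$ produces a smooth morphism of the same relative dimension, yielding $\dim(F^{*}\times_{Y}\Sing(s_{k}))\leq\dim\Sing(s_{k})+\rank F<r_{k}+\rank F$. For the complementary piece, $\Sing(t)\times_{Y}\Sing(s_{k})$ is a closed subvariety of the product $\Sing(t)\times\Sing(s_{k})$, so the inductive bound $\dim\Sing(t)<\rank F$ combined with $\dim\Sing(s_{k})<r_{k}$ gives $\dim(\Sing(t)\times_{Y}\Sing(s_{k}))<\rank F+r_{k}$. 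Combining the three contributions yields $\dim\Sigma<\rank F+r_{k}=\rank(E_{1}*\cdots*E_{k})$, completing the induction.
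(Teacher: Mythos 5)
Your proof is correct, but it organizes the induction differently from the paper's. Both arguments come down to an induction with a decomposition of the exceptional set and fiber-product dimension counts, but the paper inducts \emph{downward}, peeling off the \emph{first} factor: it sets $\Sigma_{l}=(E_{l}*\cdots*E_{k})\setminus\prod_{j=l}^{k}(E_{j}\setminus\Sing(s_{j}))$ and writes $\Sigma_{l}$ as the union of $(E_{l}*\cdots*E_{k})\cap(\Sing(s_{l})\times(E_{l+1}*\cdots*E_{k}))$ and $(E_{l}*\cdots*E_{k})\cap((E_{l}\setminus\Sing(s_{l}))\times\Sigma_{l+1})$. The advantage of peeling from the front is that the tail $p_{l+1}*\cdots*p_{k}:E_{l+1}*\cdots*E_{k}\to Y$ is an iterated vector-bundle tower, hence equidimensional over $Y$, so the first piece has dimension exactly $\dim\Sing(s_{l})+\sum_{j>l}\rank E_{j}$ with no further work; the second piece is handled by submersivity of $s_{l}$ off $\Sing(s_{l})$, exactly as in your $F^{*}$ case. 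You instead peel off the \emph{last} factor, which forces you to fiber over the non-equidimensional map $t=s_{1}*\cdots*s_{k-1}$; you correctly identify this as the obstacle and resolve it with the extra stratification $F=F^{*}\sqcup\Sing(t)$, using the crude product bound on $\Sing(t)\times_{Y}\Sing(s_{k})$ together with the inductive control of $\dim\Sing(t)$. Both routes work; the paper's choice of direction simply makes one of your three cases disappear. Your verification of the inclusion $\Sing(s_{1}*\cdots*s_{k})\subset\Sigma$ via the explicit tangent-space description of the fiber product is also more detailed than the paper's, which dismisses that step as holding by definition.
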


\begin{proof}
By definition, the singular locus of the composed spray satisfies $\Sing(s_{1}*\cdots*s_{k})\subset\Sigma$ and thus $\dim\Sing(s_{1}*\cdots*s_{k})\leq\dim\Sigma$.

Set $\Sigma_{l}=(E_{l}*\cdots*E_{k})\setminus\prod_{j=l}^{k}(E_{j}\setminus\Sing(s_{j}))$ for $l=1,\ldots,k$.
Note that $\Sigma_{1}=\Sigma$ and $\Sigma_{k}=\Sing(s_{k})$.
For each $l=1,\ldots,k-1$, the following holds:
\begin{align*}
\Sigma_{l}=(E_{l}*\cdots*E_{k})\cap((\Sing(s_{l})\times(E_{l+1}*\cdots*E_{k}))\cup((E_{l}\setminus\Sing(s_{l}))\times\Sigma_{l+1})).
\end{align*}
By definition and assumption,
\begin{align*}
\dim((E_{l}*\cdots*E_{k})\cap(\Sing(s_{l})\times(E_{l+1}*\cdots*E_{k})))&=\dim\Sing(s_{l})+\sum_{j=l+1}^{k}\rank E_{j} \\
&<\sum_{j=l}^{k}\rank E_{j} \\
&=\rank(E_{l}*\cdots*E_{k}).
\end{align*}
Since the restriction $s_{l}:E_{l}\setminus\Sing(s_{l})\to Y$ is a submersion,
\begin{align*}
&\dim((E_{l}*\cdots*E_{k})\cap((E_{l}\setminus\Sing(s_{l}))\times\Sigma_{l+1})) \\
\leq&\dim(E_{l}\setminus\Sing(s_{l}))-\dim Y+\dim\Sigma_{l+1} \\
=&\rank E_{l}+\dim\Sigma_{l+1}.
\end{align*}
Thus the desired inequality follows by downward induction on $l$.
\end{proof}

\begin{lemma}
\label{lemma:Sing(p,s)}
Assume that $Y$ is an algebraic manifold and $\{(E_{j},p_{j},s_{j})\}_{j=1}^{k}$ is a family of algebraic sprays over the identity map $\id_{Y}$.
Set $\Sigma=(E_{1}*\cdots*E_{k})\setminus\prod_{j=1}^{k}(E_{j}\setminus\Sing(s_{j}))$.
Then the following holds:
\begin{align*}
\Sing(p_{1}*\cdots*p_{k},s_{1}*\cdots*s_{k})\setminus\Sigma\subset\prod_{j=1}^{k}\Sing(p_{j},s_{j}).
\end{align*}
\end{lemma}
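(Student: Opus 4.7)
The plan is to recast submersivity of the composed spray as fullness of a composition of linear relations, and then show that if even one of the factor relations is full, the whole composition is forced to be full by a ``peel-outward'' induction.

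First I would set up the linear algebra at a point $(e_1,\ldots,e_k)\notin\Sigma$. Since each $d(s_j)_{e_j}$ is surjective there, the chain conditions define a smooth fiber product with tangent space
\[
T_{(e_1,\ldots,e_k)}(E_1*\cdots*E_k)=\{(v_1,\ldots,v_k):d(s_j)_{e_j}(v_j)=d(p_{j+1})_{e_{j+1}}(v_{j+1}),\ j=1,\ldots,k-1\},
\]
and the derivative of $(p_1*\cdots*p_k,s_1*\cdots*s_k)$ at this point is the map $(v_1,\ldots,v_k)\mapsto(d(p_1)_{e_1}(v_1),d(s_k)_{e_k}(v_k))$.

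Next I would introduce the linear relations $R_j:=\operatorname{Im}d(p_j,s_j)_{e_j}\subseteq T_{p_j(e_j)}Y\times T_{s_j(e_j)}Y$. Unpacking the definitions, surjectivity of the derivative above onto $T_{p_1(e_1)}Y\times T_{s_k(e_k)}Y$ is equivalent to the relation composition $R_k\circ R_{k-1}\circ\cdots\circ R_1$ equalling the full product: a preimage of $(a,b)$ corresponds to a choice of intermediate vectors $a=q_1,q_2,\ldots,q_k,q_{k+1}=b$ with $(q_j,q_{j+1})\in R_j$ for each $j$. Under our hypotheses each $R_j$ projects onto its first factor because the vector bundle projection $p_j$ is always a submersion, and onto its second factor because $s_j$ is a submersion at $e_j$ (the defining property of $\notin\Sigma$).

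Finally, to prove the claimed inclusion I would argue by contrapositive: if $(e_1,\ldots,e_k)\notin\prod_{j=1}^k\Sing(p_j,s_j)$ then some $e_{j_0}\notin\Sing(p_{j_0},s_{j_0})$, so $R_{j_0}$ is the full product. A short downward induction on both sides of the index $j_0$ then shows that each partial composition remains full: composing a full relation on one side with any relation that projects onto both its factors trivially produces another full relation. Hence $R_k\circ\cdots\circ R_1$ is full, so $(p_1*\cdots*p_k,s_1*\cdots*s_k)$ is a submersion at $(e_1,\ldots,e_k)$, contradicting $(e_1,\ldots,e_k)\in\Sing(p_1*\cdots*p_k,s_1*\cdots*s_k)\setminus\Sigma$. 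The main obstacle here is bookkeeping rather than content; the conceptual step is simply recognizing that the ``bi-surjective'' structure of every $R_j$ (itself a consequence of $p_j$ being a vector bundle projection and $e_j\notin\Sing(s_j)$) makes a single full $R_{j_0}$ enough to trivialize the whole composition.
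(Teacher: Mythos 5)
Your proof is correct, and it reaches the conclusion by a genuinely different (infinitesimal) route than the paper. The paper argues the same contrapositive but nonlinearly: picking $l$ with $e_{l}\notin\Sing(p_{l},s_{l})$, it takes a local holomorphic right inverse $f_{l}:U_{l}\to E_{l,p_{l}(e_{l})}$ of $s_{l}$ \emph{into the fibre} through $e_{l}$, together with local right inverses $f_{j}$ of $s_{j}$ for $j=l+1,\ldots,k$ (which exist precisely because the point lies off $\Sigma$), and chains them into an explicit local section
$y\mapsto\bigl(e_{1},\ldots,e_{l-1},f_{l}\circ(p_{l+1}\circ f_{l+1})\circ\cdots\circ(p_{k}\circ f_{k})(y),\ldots,f_{k}(y)\bigr)$
of $s_{1}*\cdots*s_{k}$ restricted to the fibre of $p_{1}*\cdots*p_{k}$, freezing the indices $1,\ldots,l-1$. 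You instead linearize: you identify the image of $\der(p_{1}*\cdots*p_{k},s_{1}*\cdots*s_{k})$ with the composition of the relations $R_{j}=\operatorname{Im}\der(p_{j},s_{j})_{e_{j}}$ and observe that one full factor together with bi-surjectivity of all the others forces the composition to be full. Both proofs rest on exactly the same three structural facts --- $p_{j}$ is always submersive, $s_{j}$ is submersive at $e_{j}$ off $\Sigma$, and one factor is fibrewise submersive --- so the difference is packaging: your relation calculus avoids writing down the somewhat awkward explicit section and handles the base direction uniformly (via first-factor surjectivity of $R_{1},\ldots,R_{j_{0}-1}$, where the paper simply freezes those coordinates), while the paper's construction stays in the section-based language it uses elsewhere and establishes fibrewise submersivity directly rather than the equivalent joint-map formulation. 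One small sharpening for your write-up: the identification of $\T_{(e_{1},\ldots,e_{k})}(E_{1}*\cdots*E_{k})$ with the linearized chain conditions holds at \emph{every} point of the composed spray because each $p_{j+1}$ is a submersion, so surjectivity of the $\der(s_{j})_{e_{j}}$ is needed only for the second-factor surjectivity of the $R_{j}$, not for smoothness of the fibre product.
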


\begin{proof}
Take a point $(e_{1},\ldots,e_{k})\in\Sing(p_{1}*\cdots*p_{k},s_{1}*\cdots*s_{k})\setminus\Sigma$ arbitrarily.
To reach a contradiction, assume that $(e_{1},\ldots,e_{k})\not\in\prod_{j=1}^{k}\Sing(p_{j},s_{j})$.
Then $e_{l}\not\in\Sing(p_{l},s_{l})$ for some $l$.
This means that $s_{l}|_{E_{l,p_{l}(e_{l})}}:E_{l,p_{l}(e_{l})}\to Y$ is a submersion at $e_{l}$.
Therefore there exists a holomorphic map $f_{l}:U_{l}\to E_{l,p_{l}(e_{l})}$ from an open neighborhood $U_{l}\subset Y$ of $s_{l}(e_{l})$ such that $s_{l}\circ f_{l}=\id_{U_{l}}$ and $f_{l}(s_{l}(e_{l}))=e_{l}$.
Since $e_{j}\not\in\Sing(s_{j})$ $(j=l+1,\ldots,k)$, there exist holomorphic maps $f_{j}:U_{j}\to E_{j}$ $(j=l+1,\ldots,k)$ from open neighborhoods of $U_{j}\subset Y$ of $s_{j}(e_{j})$ such that $s_{j}\circ f_{j}=\id_{U_{j}}$ and $f_{j}(s_{j}(e_{j}))=e_{j}$.
Note that $s_{j}(e_{j})=p_{j+1}(e_{j+1})$ holds for each $j$.
Thus after shrinking $U_{k}\ni s_{k}(p_{k})$ if necessary, we can consider the holomorphic map $f:U_{k}\to(E_{1}*\cdots*E_{k})_{(p_{1}*\cdots*p_{k})(e_{1},\ldots,e_{k})}$ defined by
\begin{align*}
f(y)=(e_{1},\ldots,e_{l-1},f_{l}\circ(p_{l+1}\circ f_{l+1})\circ\cdots\circ(p_{k}\circ f_{k})(y),\ldots,f_{k}(y)).
\end{align*}
By definition, it satisfies $(s_{1}*\cdots*s_{k})\circ f=\id_{U_{k}}$ and $f((s_{1}*\cdots*s_{k})(e_{1},\ldots,e_{k}))=(e_{1},\ldots,e_{k})$.
This implies $(e_{1},\ldots,e_{k})\not\in\Sing(p_{1}*\cdots*p_{k},s_{1}*\cdots*s_{k})$, a contradiction to our assumption.
\end{proof}

With the above lemmas and Corollary \ref{corollary:dimSing(s)<rankE}, we can construct a dominating composed spray with small relative singular locus under the assumption in Theorem \ref{theorem:main}.

\begin{corollary}
\label{corollary:dimSing(p,s)<rankE}
Assume that $Y$ is an algebraic manifold satisfying the assumption in Theorem \ref{theorem:main}.
Then there exists a dominating composed spray $(E,p,s)$ over the identity $\id_{Y}$ which satisfies $\dim\Sing(p,s)<\rank E$.
\end{corollary}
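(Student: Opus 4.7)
The plan is to assemble a finite dominating family of algebraic sprays from Corollary~\ref{corollary:dimSing(s)<rankE} via Noetherianness, and then bound the relative singular locus of their composed spray using Lemmas~\ref{lemma:Sing(s)} and~\ref{lemma:Sing(p,s)} together with a fiber-product dimension count. For each $y \in Y$, Corollary~\ref{corollary:dimSing(s)<rankE} supplies a spray $(E_y, p_y, s_y)$ over $\id_Y$ with $\dim \Sing(s_y) < \rank E_y$ and $0_y \notin \Sing(p_y, s_y)$. As $\Sing(p_y, s_y) \subset E_y$ is Zariski closed, the locus $V_y := \{y' \in Y : 0_{y'} \notin \Sing(p_y, s_y)\}$ is Zariski open and contains $y$. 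By Noetherianness of $Y$, finitely many such open sets cover $Y$, producing a family $\{(E_j, p_j, s_j)\}_{j=1}^{k_0}$ with $\bigcup_j V_j = Y$ and $\dim \Sing(s_j) < \rank E_j$ for each $j$. Repeating entries if necessary, I enlarge the family so that its length $k$ satisfies $k > \dim Y$.

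Next I form the composed spray $(E, p, s) := (E_1 * \cdots * E_k, p_1 * \cdots * p_k, s_1 * \cdots * s_k)$. For dominating, at any $y \in Y$ pick $j_0$ with $y \in V_{j_0}$; a direct derivative calculation (as in Gromov's composed spray construction) shows that the image of $(\der s)_{(0_y, \ldots, 0_y)}$ restricted to the $p$-fiber equals $\sum_j (\der s_j)_{0_y}((E_j)_y)$, which already contains $(\der s_{j_0})_{0_y}((E_{j_0})_y) = \T_y Y$. For the singular locus bound, set $\Sigma := E \setminus \prod_j (E_j \setminus \Sing(s_j))$; Lemma~\ref{lemma:Sing(s)} gives $\dim \Sigma < \rank E$, and Lemma~\ref{lemma:Sing(p,s)} gives $\Sing(p, s) \setminus \Sigma \subset \prod_j \Sing(p_j, s_j)$. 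The remaining task is to bound $\dim T$, where $T := \Sing(p_1, s_1) *_Y \cdots *_Y \Sing(p_k, s_k)$ is the iterated fiber product inside $E$.

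Nonemptiness of each $V_j$ ensures that the fiber of $p_j \colon \Sing(p_j, s_j) \to Y$ over any point of $V_j$ avoids the origin of $(E_j)_y \cong \C^{\rank E_j}$, so has dimension at most $\rank E_j - 1$; over the proper closed complement $Y \setminus V_j$ of dimension at most $\dim Y - 1$, the fiber has dimension at most $\rank E_j$. This gives $\dim\Sing(p_j, s_j) \leq \dim Y + \rank E_j - 1$. Iterating the fiber-product dimension inequality $\dim(A *_Y B) \leq \dim A + \dim B - \dim Y$ telescopes to
\[
\dim T \leq \sum_{j=1}^{k} \dim\Sing(p_j, s_j) - (k-1)\dim Y \leq \dim Y + \rank E - k < \rank E,
\]
where the last strict inequality uses $k > \dim Y$.

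The main obstacle will be rigorously justifying the fiber-product dimension inequality at each stage: the bound applies immediately when the relevant structure map to $Y$ is dominant, but in the non-dominant case the image of $s_l|_{\Sing(p_l, s_l)}$ sits in a proper closed subvariety of $Y$, which confines the subsequent fiber product to a lower-dimensional base and in fact improves the estimate. The cleanest resolution will be to stratify $Y$ according to the dominance properties of each $s_l|_{\Sing(p_l, s_l)}$ and handle the resulting strata uniformly.
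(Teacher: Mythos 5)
Your setup (Noetherian extraction of a finite dominating family from Corollary~\ref{corollary:dimSing(s)<rankE}, the domination of the composed spray, and the reduction via Lemmas~\ref{lemma:Sing(s)} and~\ref{lemma:Sing(p,s)} to bounding the fiber product $T=\Sing(p_{1},s_{1})*_{Y}\cdots*_{Y}\Sing(p_{k},s_{k})$) is sound, but the dimension count for $T$ has a genuine gap, and it is exactly the step you flag as the ``main obstacle.'' The inequality $\dim(A*_{Y}B)\leq\dim A+\dim B-\dim Y$ is false without an equidimensionality hypothesis on the structure maps, and here the relevant map $p_{j}|_{\Sing(p_{j},s_{j})}$ is precisely \emph{not} equidimensional: its fibers have dimension at most $\rank E_{j}-1$ over $V_{j}$ but can equal the full fiber $E_{j,y}$ (dimension $\rank E_{j}$) over the closed set $Y\setminus V_{j}$. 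If the image of $s_{j-1}|_{\Sing(p_{j-1},s_{j-1})}$ happens to be concentrated inside $Y\setminus V_{j}$ --- which nothing in the hypotheses rules out, since the $V_{j}$ only cover $Y$ jointly --- then the $j$-th stage of the fiber product contributes a full $\rank E_{j}$ and the telescoping gains nothing there; your claim that non-dominance ``in fact improves the estimate'' is not correct, because the shrinking of the image can be exactly offset (or worse) by the jump in the fiber dimension over that image. Consequently the target bound $\dim T\leq\dim Y+\rank E-k$ is not established, and the proposed stratification is not worked out in a way that repairs this; as stated the count does not close.

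The paper avoids this problem by composing in two steps. First it forms the composed spray $(\widetilde E,\tilde p,\tilde s)$ of the finite dominating family \emph{once}; since this single spray is dominating at every point of $Y$, the zero section avoids $\Sing(\tilde p,\tilde s)$ everywhere, so every fiber $\Sing(\tilde p,\tilde s)_{y}$ is a proper closed subvariety of the irreducible fiber $\widetilde E_{y}$ and hence has dimension at most $\rank\widetilde E-1$ \emph{uniformly in} $y$. It then iterates this one spray $n>\dim Y$ times: the chain structure gives $\sup_{y}\dim(\Sing(\tilde p,\tilde s)^{(n)})_{y}\leq n(\rank\widetilde E-1)<\rank\widetilde E^{(n)}-\dim Y$, so adding $\dim Y$ for the base still leaves the total below $\rank\widetilde E^{(n)}$, and Lemmas~\ref{lemma:Sing(s)} and~\ref{lemma:Sing(p,s)} finish as in your outline. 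The uniformity of the fiber bound, obtained by composing before iterating, is exactly what your one-step argument is missing; if you insert that intermediate composition, your approach becomes the paper's proof.
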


\begin{proof}
By Corollary \ref{corollary:dimSing(s)<rankE}, there exists a dominating family of algebraic sprays $\{(E_{j},p_{j},s_{j})\}_{j=1}^{k}$ over the identity map $\id_{Y}$ such that $\dim\Sing(s_{j})<\rank E_{j}$ for each $j$.
Then the composed spray $(\widetilde E,\tilde p,\tilde s)$ of the family $\{(E_{j},p_{j},s_{j})\}_{j=1}^{k}$ is dominating and satisfies $\dim\Sing(\tilde s)<\rank\widetilde E$ by Lemma \ref{lemma:Sing(s)}.
Note that the dimensions of the fibers of $\Sing(\tilde p,\tilde s)$ satisfy $\sup_{y\in Y}\dim\Sing(\tilde p,\tilde s)_{y}\leq\rank\widetilde E-1$ since the spray $(\widetilde E,\tilde p,\tilde s)$ is dominating.
Let us consider the $n$-th iterated spray $(\widetilde E^{(n)},\tilde p^{(n)},\tilde s^{(n)})$ of $(\widetilde E,\tilde p,\tilde s)$ for a natural number $n>\dim Y$.
Set $\Sing(\tilde p,\tilde s)^{(n)}=\widetilde E^{(n)}\cap(\Sing(\tilde p,\tilde s))^{n}$ and $\Sigma=\widetilde E^{(n)}\setminus(\widetilde E\setminus\Sing(\tilde s))^{n}$.
Then the dimensions of the fibers of $\Sing(\tilde p,\tilde s)^{(n)}$ satisfies
\begin{align*}
\sup_{y\in Y}\dim(\Sing(\tilde p,\tilde s)^{(n)})_{y}&\leq n\sup_{y\in Y}\dim\Sing(\tilde p,\tilde s)_{y} \\
&\leq n\rank\widetilde E-n \\
&<\rank \widetilde E^{(n)}-\dim Y,
\end{align*}
and hence $\dim\Sing(\tilde p,\tilde s)^{(n)}<\rank\widetilde E^{(n)}$.
By Lemma \ref{lemma:Sing(s)}, the closed subvariety $\Sigma$ also satisfies $\dim\Sigma<\rank\widetilde E^{(n)}$.
Since $\Sing(\tilde p^{(n)},\tilde s^{(n)})\subset\Sing(\tilde p,\tilde s)^{(n)}\cup\Sigma$ by Lemma \ref{lemma:Sing(p,s)}, it follows that $\dim\Sing(\tilde p^{(n)},\tilde s^{(n)})<\rank\widetilde E^{(n)}$.
Thus $(E,p,s)=(\widetilde E^{(n)},\tilde p^{(n)},\tilde s^{(n)})$ is a desired dominating composed spray.
\end{proof}

%%%%%%%%%%%%%%%%%%%%%%%%%%%%%%%%%%%%%%%%%%
%
% Proof of Theorem \ref{theorem:main}
%
%%%%%%%%%%%%%%%%%%%%%%%%%%%%%%%%%%%%%%%%%%

\section{Proof of Theorem \ref{theorem:main}}
\label{section:proof}

Our proof of Theorem \ref{theorem:main} is based on the ideas in \cite[\S4]{Forstneric2006a}.
Let $\pr_{X_{\lambda_{0}}}:\prod_{\lambda\in\Lambda}X_{\lambda}\to X_{\lambda_{0}}$ denote the projection map to $X_{\lambda_{0}}\ (\lambda_{0}\in\Lambda)$.

\begin{proof}[Proof of Theorem \ref{theorem:main}]
By Corollary \ref{corollary:dimSing(p,s)<rankE}, there exists a dominating composed spray $(E,p,s)$ over the identity $\id_{Y}$ satisfying $\dim\Sing(p,s)<\rank E$.
Let us consider the pullback spray $(f_{0}^{*}E,f_{0}^{*}p,f_{0}^{*}s)$.
By the argument in the proof of \cite[Lemma 3.6]{Forstneric2006a}, there exists an algebraic submersion (i.e. a smooth morphism) $\varphi:X\times\C^{N}\to f_{0}^{*}E$ over $X$ which preserves the zero section.
Take regular functions $g_{1},\ldots,g_{L}:X\to\C$ which vanish to order $k+1$ on the subvariety $X_{0}=\{x\in X:g_{j}(x)=0,\ j=1,\ldots,L\}$.
Consider the regular map $s_{X_{0}}:X\times(\C^{N})^{L}\to X\times\C^{N}$, $(x,t_{1},\ldots,t_{L})\mapsto (x,g_{1}(x)t_{1}+\cdots+g_{L}(x)t_{L})$ over $X$.
Note that $s_{X_{0}}(x,\cdot):(\C^{N})^{L}\to\{x\}\times\C^{N}\cong\C^{N}$ is a submersion if $x\in X\setminus X_{0}$, and it vanishes if $x\in X_{0}$.
Since $X$ is affine, we may assume that $X$ is a closed algebraic submanifold of $\C^{n}$ for some $n\in\N$.
Let $W$ denote the complex vector space of polynomial maps $\C^{n}\to\C^{N+L}$ of degree at most $k$, and $s_{W}:X\times W\to X\times\C^{N+L}$ be the algebraic submersion over $X$ defined by $s_{W}(x,P)=(x,P(x))$.
We define the algebraic spray $s_{f_{0}}:X\times\C^{N+L}\to Y$ over $f_{0}$ by $s_{f_{0}}=f_{0}^{*}s\circ\varphi\circ s_{X_{0}}\circ s_{W}$ (we are identifying the zero sections with $X$ in the following diagram):
\begin{center}
\begin{tikzcd}
X \arrow[d, hook] \arrow[rr, equal] &  & X \arrow[d, hook] \arrow[drr, bend left=15, "f_{0}"]  \\
X\times W \arrow[d] \arrow[rr, "{\varphi\circ s_{X_{0}}\circ s_{W}}"] & & f_{0}^{*}E \arrow[r, "p^{*}f_{0}"] \arrow[d] & E \arrow[d] & Y \arrow[from=llll, dashed, crossing over, bend right=18, "s_{f_{0}}"' near start] \arrow[from=l, "s"]  \\
X \arrow[rr, equal] & & X \arrow[r, "f_{0}"] & Y \arrow[ul, phantom, "\ulcorner", very near start]
\end{tikzcd}
\end{center}
Note that $\Sing(\pr_{X},s_{f_{0}})\setminus(X_{0}\times W)=(p^{*}f_{0}\circ\varphi\circ s_{X_{0}}\circ s_{W})^{-1}(\Sing(p,s))\setminus(X_{0}\times W)$ holds by construction.
From this and $\dim\Sing(p,s)<\rank E$, it follows that there exists a dense Zariski open subset $U\subset W$ such that $\Sing(\pr_{X},s_{f_{0}})\cap(X\times U)\subset X_{0}\times U$.

For each $l\in\N$, let us consider the map $\Phi_{k_{l}}:X\times W\to\J^{k_{l}}(X,Y)$ defined by $\Phi_{k_{l}}(x,P)=\j^{k_{l}}(s_{f_{0}}\circ(\id_{X},P))(x)$.
Then by the proof of \cite[Lemma 4.5]{Forstneric2006a}, the singular locus of $(\pr_{X},\Phi_{k_{l}})$ satisfies $\Sing(\pr_{X},\Phi_{k_{l}})\subset\Sing(\pr_{X},s_{f_{0}})$, and thus $\Sing(\pr_{X},\Phi_{k_{l}})\cap(X\times U)\subset X_{0}\times U$.
Let us consider the complex submanifolds $A_{l}\subset X$ and $B_{l}\subset\J^{k_{l}}(X,Y)$.
For $P\in U$, we can easily see that the regular map $\Phi_{k_{l}}(\cdot,P)|_{A_{l}\setminus X_{0}}:A_{l}\setminus X_{0}\to\J^{k_{l}}(X,Y)$ is transverse to $B_{l}$ if and only if $P\not\in\pr_{U}(\Sing(\pr_{U}|_{B_{l}'}))$ where $B_{l}'=(\Phi_{k_{l}}|_{(A_{l}\setminus X_{0})\times U})^{-1}(B_{l})$.
By Sard's theorem, there exists a polynomial map $P\in U\setminus\bigcup_{l\in\N}\pr_{U}(\Sing(\pr_{U}|_{B_{l}'}))$ which is sufficiently close to $0$.
Then by construction, the regular map $f=s_{f_{0}}(\cdot,P):X\to Y$ satisfies $\j^{k}f|_{X_{0}}=\j^{k}f_{0}|_{X_{0}}$, and thus $\j^{k_{l}}f|_{A_{l}}$ is transverse to $B_{l}$ for each $l\in\N$.
Furthermore, it also approximates $f_{0}$ since $P$ is close to 0.
\end{proof}

In the rest of this section, we give a few remarks on the assumption in Theorem \ref{theorem:main}.

\begin{remark}
\label{remark:assumption}
As we mentioned in the introduction, if the jet transversality theorem holds for regular maps from affine manifolds to an algebraically Oka manifold $Y$, then $Y$ satisfies the assumption in Theorem \ref{theorem:main}.
Indeed, for any $y\in Y$ and any affine Zariski open neighborhood $U\subset Y$ of $y$ there exists a dominating algebraic spray $s_{0}:U\times\C^{N}\to Y$ over the inclusion $U\hookrightarrow Y$ since $Y$ is algebraically Oka.
By the jet transversality theorem and the proof of (3) in Corollary \ref{corollary:genericity} (see \cite[\S8.9]{Forstneric2017}), there exists a regular map $s:U\times\C^{N}\to Y$ such that $\dim\Sing(s)<\dim Y\leq N$ and $\j^{1}s|_{U\times\{0\}}=\j^{1}s_{0}|_{U\times\{0\}}$, and thus it is a dominating algebraic spray over the inclusion $U\hookrightarrow Y$ with the desired property.
\end{remark}

The above remark leads us to the following question.

\begin{question}
Does every algebraically Oka manifold satisfy the assumption in Theorem \ref{theorem:main}?
\end{question}

Recall that a flexible manifold $Y$ admits a dominating algebraic spray $s:Y\times\C^{N}\to Y$ over the identity map $\id_{Y}$ such that $s(\cdot,w)\in\Aut Y$ for all $w\in\C^{N}$ (see the proof of \cite[Proposition 5.6.22]{Forstneric2017}).
Thus the following proposition holds.

\begin{proposition}
\label{proposition:flexible}
Every locally flexible manifold satisfies the assumption in Theorem \ref{theorem:main}.
\end{proposition}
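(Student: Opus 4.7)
The plan is to unwind the definitions and apply the fact recalled immediately before the proposition. Fix a point $y \in Y$. Since $Y$ is locally flexible, choose a flexible Zariski open neighborhood $U \subset Y$ of $y$. The consequence of flexibility quoted from \cite[Proposition 5.6.22]{Forstneric2017} supplies a dominating algebraic spray $s : U \times \C^{N} \to U$ over $\id_{U}$ with the extra property that $s(\cdot, w) \in \Aut U$ for every $w \in \C^{N}$. Composing with the open inclusion $U \hookrightarrow Y$, regard $s$ as a regular map $U \times \C^{N} \to Y$; this is my candidate for the spray required by the hypothesis of Theorem \ref{theorem:main}.

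Next I would verify the three conditions. First, $s(x, 0) = x$ for every $x \in U$ because the spray is over the identity, which covers the requirement $s(y, 0) = y$ for each $y \in U$. Second, $s(y, \cdot) : \C^{N} \to Y$ is a submersion at $0$ for each $y \in U$: this is precisely the domination of the spray, combined with $\T_{y} U = \T_{y} Y$ (since $U$ is open in $Y$). Third, and this is the decisive point, $\dim \Sing(s) < N$ holds because in fact $\Sing(s) = \emptyset$. Indeed, for any $(x, w) \in U \times \C^{N}$, the restriction of $(\der s)_{(x, w)}$ to the first factor is $\der(s(\cdot, w))_{x}$, which is an isomorphism since $s(\cdot, w) \in \Aut U$; hence the full differential of $s$ is already surjective at every point $(x, w)$.

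There is essentially no obstacle: the whole content of the proposition is the observation that the automorphism property of the $w$-slices upgrades \emph{dominating at the zero section} to \emph{submersive everywhere}, which is exactly the strengthening of the algebraic Oka condition demanded by the hypothesis of Theorem \ref{theorem:main}. All the genuine difficulty is absorbed into the preliminary fact that a flexible manifold admits a spray through automorphisms.
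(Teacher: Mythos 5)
Your proof is correct and is essentially the argument the paper intends: it recalls the automorphism-valued dominating spray for flexible manifolds and immediately concludes the proposition, leaving exactly the verification you spell out (zero-section condition, domination at $0$, and $\Sing(s)=\emptyset$ because each slice $s(\cdot,w)$ is an automorphism) as implicit. No gaps.
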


%%%%%%%%%%%%%%%%%%%%%%%%%%%%%%%%%%%%%%%%%%
%
% Proof of Corollary \ref{corollary:complement}
%
%%%%%%%%%%%%%%%%%%%%%%%%%%%%%%%%%%%%%%%%%%

\section{Proof of Corollary \ref{corollary:complement}}
\label{section:applications}

Let us first recall the Convex Approximation Property (CAP) and its algebraic
version aCAP.

\begin{definition}
\label{definition:CAP}
A complex manifold (resp. an algebraic manifold) $Y$ enjoys \emph{CAP} (resp. \emph{aCAP}) if any holomorphic map from an open neighborhood of a compact convex set $K\subset\C^{n}$ $(n\in\N)$ to $Y$ can be uniformly approximated on $K$ by holomorphic maps (resp. regular maps) $\C^{n}\to Y$.
\end{definition}

In order to prove Corollary \ref{corollary:complement}, we also need to recall the following fact.

\begin{theorem}[{cf. \cite[Theorem 1.3]{Kusakabe2019}, \cite[Corollary 1.2 and Proposition 4.6]{Forstneric2006a}}]
\label{theorem:CAP}
\ \\
(1) A complex manifold enjoys $\CAP$ if and only if it is Oka.
\\
(2) An algebraic manifold enjoys $\aCAP$ if it is algebraically Oka.
\end{theorem}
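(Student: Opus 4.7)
The plan is to establish CAP for $Y\setminus A$, whence $Y\setminus A$ is Oka by Theorem \ref{theorem:CAP}(1). Since $\varphi$ restricts to a biholomorphism $Y\setminus A\to Y\setminus\varphi(A)$, I first replace $A$ by $\varphi(A)$ and assume that $A$ itself is algebraically degenerate, i.e., $A\subset A_{0}$ for some proper closed algebraic subvariety $A_{0}\subsetneq Y$.

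Given a holomorphic map $f:U\to Y\setminus A$ from an open neighborhood $U$ of a compact convex set $K\subset\C^{n}$, I would first invoke aCAP for $Y$ (available by Theorem \ref{theorem:CAP}(2), since the hypothesis of Theorem \ref{theorem:main} implies via Lemma \ref{lemma:extension} that $Y$ is algebraically Oka) to approximate $f$ uniformly on $K$ by a regular map $g:\C^{n}\to Y$. Next, I would fix a Whitney stratification of $A$ by locally closed complex submanifolds $\{B_{l}\}$ of $Y$, each of codimension at least two, and a smooth stratification $\{B_{m}'\}$ of $A_{0}$ by complex submanifolds of positive codimension. Applying Theorem \ref{theorem:main} with $X=\C^{n}$, $X_{0}=\emptyset$, $k_{l}=0$, $A_{l}=\C^{n}$, and the submanifolds $\C^{n}\times B_{l},\ \C^{n}\times B_{m}'\subset\J^{0}(\C^{n},Y)\cong\C^{n}\times Y$, I obtain a regular approximation $\tilde g:\C^{n}\to Y$ of $g$ on $K$ that is transverse to each $B_{l}$ and each $B_{m}'$.

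Transversality to the $B_{l}$ makes $\tilde g^{-1}(A)=\bigsqcup_{l}\tilde g^{-1}(B_{l})$ a closed complex analytic subvariety of $\C^{n}$ of codimension at least two, while transversality to a top-dimensional stratum of $A_{0}$ ensures $\tilde g(\C^{n})\not\subset A_{0}$, so $\tilde g^{-1}(A_{0})\subsetneq\C^{n}$ is a proper closed algebraic subvariety containing $\tilde g^{-1}(A)$. Hence $\tilde g^{-1}(A)$ is an algebraically degenerate closed complex subvariety of $\C^{n}$ of codimension at least two, and \cite[Theorem 1.6]{Forstneric2002} implies that $\C^{n}\setminus\tilde g^{-1}(A)$ is Oka, hence enjoys CAP by Theorem \ref{theorem:CAP}(1). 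Since $\tilde g$ is uniformly close to $f$ on $K$ and $f(K)\cap A=\emptyset$, I may arrange $K\cap\tilde g^{-1}(A)=\emptyset$; CAP for $\C^{n}\setminus\tilde g^{-1}(A)$ then yields holomorphic maps $\phi:\C^{n}\to\C^{n}\setminus\tilde g^{-1}(A)$ approximating the inclusion of a neighborhood of $K$ uniformly on $K$, and the composition $\tilde g\circ\phi:\C^{n}\to Y\setminus A$ provides the desired entire approximation of $f$ on $K$.

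The main obstacle is orchestrating the jet transversality step to secure simultaneously that $\tilde g^{-1}(A)$ is analytic of codimension at least two and that it sits inside the proper algebraic subvariety $\tilde g^{-1}(A_{0})$; this is precisely the point where the algebraic-degeneracy hypothesis is exploited to transfer the problem from $Y$ to $\C^{n}$, reducing the statement to the Oka-complement theorem in \cite{Forstneric2002}.
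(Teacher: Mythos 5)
The proposal does not address the statement at hand. Theorem \ref{theorem:CAP} asserts (1) that a complex manifold is Oka if and only if it enjoys CAP, and (2) that an algebraically Oka manifold enjoys aCAP; in the paper this is a quoted result, with (1) coming from \cite[Theorem 1.3]{Kusakabe2019} (ultimately Forstneri\v{c}'s CAP characterization of Oka manifolds) and (2) from \cite[Proposition 4.6]{Forstneric2006a}, and no proof is given or needed there. What you prove instead is Corollary \ref{corollary:complement}: that $Y\setminus A$ is Oka when $A$ is, up to a holomorphic automorphism, an algebraically degenerate closed subvariety of codimension at least two. Your argument is essentially the paper's own proof of that corollary --- replace $A$ by $\varphi(A)$, approximate by a regular map using aCAP, apply the jet transversality theorem (Theorem \ref{theorem:main}) to obtain transversality to (a stratification of) $A$, pull back to an algebraically degenerate codimension-two subvariety of $\C^{n}$, invoke \cite[Theorem 1.6]{Forstneric2002}, and compose --- with a reasonable extra elaboration guaranteeing $\tilde g(\C^{n})\not\subset A_{0}$ so that $\tilde g^{-1}(A_{0})$ is a proper algebraic subvariety.

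As a proof of Theorem \ref{theorem:CAP}, however, the proposal is circular: you invoke Theorem \ref{theorem:CAP}(1) repeatedly (to pass between CAP and the Oka property for $Y\setminus A$ and for $\C^{n}\setminus\tilde g^{-1}(A)$) and Theorem \ref{theorem:CAP}(2) once (to get aCAP for $Y$), so the statement to be proved is used as a black box throughout. A genuine proof of (1) requires the hard implication that CAP implies the full Oka property (Forstneri\v{c}'s theorem, or the elliptic characterization of \cite{Kusakabe2019}), and a proof of (2) requires showing that a dominating algebraic spray over a regular map yields Runge-type approximation on compact convex sets by regular maps, as in \cite[Proposition 4.6]{Forstneric2006a}; none of this content appears in your proposal.
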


\begin{proof}[Proof of Corollary \ref{corollary:complement}]
We may assume that $A\subset Y$ is algebraically degenerate from the beginning.
Let us verify that $Y\setminus A$ enjoys CAP.
Take a compact convex set $K\subset\C^{n}$ and a holomorphic map $f:U\to Y\setminus A$ from an open neighborhood of $K$.
Since $Y$ is algebraically Oka, it enjoys aCAP by Theorem \ref{theorem:CAP}.
Thus there exists a regular map $\tilde f:\C^{n}\to Y$ which approximates $f$ uniformly on $K$ and satisfies $\tilde f(K)\subset Y\setminus A$.
By the jet transversality theorem (Theorem \ref{theorem:main}), we may assume that $\tilde f$ is transverse to $A$.
Then the inverse image $\tilde f^{-1}(A)\subset\C^{n}$ is an algebraically degenerate closed complex subvariety of codimension at least two.
Therefore the complement $\C^{n}\setminus\tilde f^{-1}(A)$ is Oka by the result of Forstneri\v{c} and Prezelj \cite[Theorem 1.6]{Forstneric2002}, and hence it enjoys CAP by Theorem \ref{theorem:CAP}.
Thus there exists a holomorphic map $g:\C^{n}\to\C^{n}\setminus\tilde f^{-1}(A)$ which approximates the inclusion $K\hookrightarrow\C^{n}\setminus\tilde f^{-1}(A)$ uniformly.
Then the composition $\tilde f\circ g:\C^{n}\to Y\setminus A$ approximates $f$ uniformly on $K$.
Therefore $Y\setminus A$ is Oka by Theorem \ref{theorem:CAP}.
\end{proof}

\begin{remark}
\label{remark:aCAP}
Let $Y$ be an algebraic manifold satisfying the assumption in Theorem \ref{theorem:main}.
The above argument also implies that the complement of a closed algebraic subvariety of codimension at least two in $Y$ enjoys aCAP since the complement $\C^{n}\setminus\tilde f^{-1}(A)$ is algebraically Oka in this case (cf. \cite[Proposition 5.6.17]{Forstneric2017}).
It is not known whether aCAP implies the algebraic Oka property (the converse of (2) in Theorem \ref{theorem:CAP}).
\end{remark}

In the previous paper, we proved that the blowup of an algebraically Oka manifold along an algebraic submanifold is Oka \cite[Corollary 1.5]{Kusakabea}.
For an algebraic manifold $Y$ satisfying the assumption in Theorem \ref{theorem:main}, we can reduce the proof of the Oka property of the blowup of $Y$ along an algebraically degenerate complex submanifold to the Euclidean case as in the proof of Corollary \ref{corollary:complement} (see the proof of \cite[Corollary 4.3]{Kusakabea}).
Thus it is natural to ask the following question.
It will be studied in future work.

\begin{question}
Is the blowup of $\C^{n}$ along an algebraically degenerate closed complex submanifold is Oka?
\end{question}

%%%%%%%%%%%%%%%%%%%%%%%%%%%%%%%%%%%%%%%%%%
%
% Acknowledgement
%
%%%%%%%%%%%%%%%%%%%%%%%%%%%%%%%%%%%%%%%%%%

\section*{Acknowledgement}

I would like to thank Finnur L\'{a}russon for helpful comments.
This work was supported by JSPS KAKENHI Grant Number JP18J20418.
%the Grant-in-Aid for Scientific Research (KAKENHI No.18J20418).

%%%%%%%%%%%%%%%%%%%%%%%%%%%%%%%%%%%%%%%%%%
%
% References
%
%%%%%%%%%%%%%%%%%%%%%%%%%%%%%%%%%%%%%%%%%%

%\bibliographystyle{abbrv}
%\bibliography{references}

\end{document}